\documentclass{amsart}

\usepackage{tikz}
\usepackage{xcolor}

\usepackage{latexsym,amsmath,amssymb,epic}
\usepackage{latexsym,amssymb,epic}
\usepackage{amsmath,amsthm}
\usepackage{color}
\usepackage{soul}
\usepackage[pagebackref]{hyperref}
\hypersetup{
	colorlinks=true, %set true if you want colored links
	linktoc=all, %set to all if you want both sections and subsections linked
	linkcolor=blue} %choose some color if you want links to stand out

\usepackage[margin=1in]{geometry}

\newtheorem{theorem}{Theorem}[section]
\newtheorem{lemma}[theorem]{Lemma}
\newtheorem{corollary}[theorem]{Corollary}

\newcommand{\OL}[1]{\overline{#1}}
\newcommand{\Z}{{\mathbb Z}}

\newcommand{\infprod}{{\displaystyle \prod_{i=1}^{\infty}}}

\allowdisplaybreaks

\begin{document}

	\title[Congruences for Overcubic Partition $k$-Tuples]{Congruences for Overcubic Partition $k$-Tuples}

	\author{Daniel Chac\'{o}n}
	\address{Department of Mathematics and Statistics, University of Minnesota Duluth, Duluth, MN 55812, USA}
	\email{chaco054@d.umn.edu }

	\author{James A. Sellers}
	\address{Department of Mathematics and Statistics, University of Minnesota Duluth, Duluth, MN 55812, USA}
	\email{jsellers@d.umn.edu}

	\subjclass[2010]{11P83, 05A17}
	
	\keywords{partitions, congruences, generating functions, $q$-series, dissections}

\begin{abstract}
\noindent In the last few years, a number of authors have proved divisibility properties satisfied by various functions which count the number of overcubic partition $k$--tuples of weight $n$ for small values of $k$.  In this work, we use generating functions to prove some of their results as well as multiple infinite families of new congruences for   overcubic partition $k$-tuples which do not yet appear in the literature.  In particular, we focus on a new perspective which provides insights as to why these functions are often divisible by powers of 2, and we also prove families of congruences whose moduli are odd.  For example, we prove that, for all $m\geq 0$, $\OL{b}_{4}(22m + 11) \equiv 0 \pmod{11}$ and we also prove infinite families such as $\OL{b}_{9l+2}(9m+3) \equiv 0 \pmod{3}$ for all $m,l\geq0$.

\end{abstract}

\maketitle

\section{Introduction} 
\label{sec:Introduction}

A partition $\lambda$ of a positive integer $n$ is a finite non-increasing sequence of positive integers $\lambda_1 \geq \lambda_2 \geq \cdots \geq \lambda_r$ such that $\sum\limits_{i=1}^r \lambda_i = n$. Each $\lambda_i$ is called a part of the partition $\lambda$.
For example, there are five  partitions when $n = 4$ which are
$$4 \text{, } 3+1\text{, }2+2\text{, }2+1+1, \text{ and } 1+1+1+1.$$
For all $n\geq0$, let $p(n)$ be the number of partitions of $n$ with $p(0) := 1$.  From the example above, we have $p(4) = 5$.

In the work below, we will be interested in the generating functions for various integer partition functions.  As such, we will utilize $q$-Pochhhammer notation which is defined as follows:  For $j \geq 1$,
$$(a;q)_j = \prod_{i=1}^j (1-aq^{i-1})$$
where $(a;q)_0 = 1$.  For $|q|<1$, we have  
$(a;q)_\infty = \lim_{j\to\infty}(a;q)_j.$  To simplify our notation even further, we define $f_k = (q^k;q^k)_\infty$ for fixed $k\geq 1$.  

As Euler proved, the generating function for $p(n)$ is given by
$$\sum_{n\geq0}p(n)q^n = \frac{1}{f_1}.$$
Approximately a century ago, Ramanujan \cite{Ramanujan} noticed congruences satisfied by $p(n)$ for the moduli $5, 7,$ and $11$.  Using generating function manipulations, he proved the following:  

\begin{theorem}\label{ramanujanCongruences}
    For all $n\geq 0$, 
    \begin{align}
        p(5n+4) &\equiv 0 \pmod{5}, \\
        p(7n+5) &\equiv 0 \pmod{7}, \text { and}\\
        p(11n+6) &\equiv 0 \pmod{11}.
    \end{align}
\end{theorem}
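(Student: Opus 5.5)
The plan is to use generating functions and the classical product identities of Euler and Jacobi, working modulo the prime $\ell\in\{5,7,11\}$. The common starting point is
$$\sum_{n\geq0}p(n)q^{n} = \frac{1}{f_1} = \frac{f_1^{\ell-1}}{f_1^{\ell}},$$
together with the congruence $f_1^{\ell}\equiv f_\ell \pmod{\ell}$. This holds because $(1-x)^{\ell}\equiv 1-x^{\ell}\pmod{\ell}$. It gives
$$\sum_{n\geq0}p(n)q^{n}\equiv \frac{f_1^{\ell-1}}{f_\ell}\pmod{\ell}.$$
Since $1/f_\ell$ is a series in $q^{\ell}$, it suffices to show the following: in $q f_1^{\ell-1}$ (for $\ell=5$), the coefficients of $q^{n}$ with $n\equiv 0\pmod{\ell}$ vanish modulo $\ell$. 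The analogous statements for $\ell=7,11$ use the shifts $q^{2}$ and $q^{5}$. These shifts correspond to $24\cdot 4\equiv -1\pmod 5$, $24\cdot5\equiv -1 \pmod 7$ and $24\cdot 6\equiv -1\pmod{11}$.

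For $\ell=5$, write $f_1^{4}=f_1\cdot f_1^{3}$. Use Euler's pentagonal theorem $f_1=\sum_{j}(-1)^{j}q^{j(3j-1)/2}$ and Jacobi's identity $f_1^{3}=\sum_{k\geq0}(-1)^{k}(2k+1)q^{k(k+1)/2}$. The exponent of $q$ in $q f_1^4$ is $1+\tfrac{j(3j-1)}{2}+\tfrac{k(k+1)}{2}$. Multiplying by $8$, the condition that this exponent is $\equiv 0\pmod 5$ becomes
$$2(6j-1)^{2}+(2k+1)^{2}\equiv 0\pmod 5.$$
Because $2$ is a quadratic nonresidue mod $5$, this forces $2k+1\equiv 0\pmod 5$. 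Hence every contributing term carries the factor $(2k+1)\equiv0\pmod 5$, which proves $p(5n+4)\equiv 0\pmod 5$.

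For $\ell=7$, write $f_1^{6}=(f_1^{3})^{2}$ and apply Jacobi's identity to both factors. The exponent condition for $q^{2}f_1^{6}$ becomes
$$(2k+1)^{2}+(2m+1)^{2}\equiv 0\pmod 7.$$
Since $-1$ is a nonresidue mod $7$, this forces $7\mid 2k+1$ and $7\mid 2m+1$, so each such term is divisible by $49$. That gives $p(7n+5)\equiv 0\pmod 7$.

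The case $\ell=11$ is the main obstacle. Here $f_1^{10}$ cannot be split into Euler and Jacobi factors in a way that makes the quadratic-form argument work. Writing $f_1^{10}=f_1\cdot(f_1^{3})^3$ leads to a quaternary form, and a quaternary form represents $0$ mod $11$ nontrivially. My first attempt would use the classical two-variable theta series for $f_1^{10}$ (Winquist-type, or the formula coming from the Macdonald identity for $A_2$ or $B_2$). This expresses $f_1^{10}$ as a double sum $\sum c(x,y)\,q^{Q(x,y)}$, where $c(x,y)$ is a polynomial weight such as $xy(x^2-y^2)$ times suitable constants. I would then check that the residue condition $Q(x,y)\equiv -5 \pmod{11}$ (after normalization) forces $11\mid c(x,y)$. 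If that is too cumbersome, the fallback is Winquist's identity, which gives $f_1^{10}$ exactly and is designed for this purpose, followed by the same residue analysis.

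An alternative fallback is to verify the identity as a statement about modular forms. One would show that $\sum p(11n+6)q^{n}$, suitably multiplied by an eta quotient, is a modular form on $\Gamma_0(11)$ and is congruent to $0$ mod $11$ by Sturm's bound. This would require only finitely many coefficient checks. I would present the $\ell=5,7$ cases fully and cite Ramanujan's or Winquist's identity for the decisive step at $\ell=11$.
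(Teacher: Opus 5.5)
The paper does not prove this theorem at all. It is quoted as Ramanujan's classical result, and only the modulus-$7$ case is used later (in the proof of $\OL{b}_4(14m+7)\equiv 0\pmod 7$). Your route is therefore necessarily different. For $\ell=5$ and $\ell=7$ it is a complete and correct proof: the reductions $8N\equiv 2(6j-1)^2+(2k+1)^2\pmod 5$ and $8N\equiv (2k+1)^2+(2m+1)^2\pmod 7$ check out, and the nonresidue argument makes every surviving weight divisible by $5$, resp.\ $49$. This is the same mechanism the paper uses for $\OL{b}_4(22m+11)\equiv 0\pmod{11}$, where $(6N+1)^2\equiv 0\pmod{11}$ kills the weights of $f_1^5/f_2^2$. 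So your argument fits the paper's elementary style and would make the paper self-contained for the one case it needs.

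There are two small slips. At $\ell=5$ the fact you need is that $-2$ (not $2$) is a nonresidue mod $5$; this holds because $-1\equiv 2^2$. Also, $24\cdot 4$, $24\cdot 5$, $24\cdot 6$ are $\equiv +1$ modulo $5,7,11$ respectively; it is the shifts $s=1,2,5$ that satisfy $24s\equiv -1\pmod{\ell}$.

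For $\ell=11$ there is a genuine gap: the proposal stops at a menu of strategies, and its one concrete claim (that the residue condition forces $11\mid c(x,y)$) is never checked. Note also that the Macdonald identity for $A_2$ gives $f_1^8$, not $f_1^{10}$; it is $B_2$ that gives $f_1^{10}$. As written, the third congruence is not proved.

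The gap closes along your own lines once the identity is written out. The $B_2$ Macdonald identity (equivalently, the limit $a,b\to 1$ of Winquist's identity after a linear change of summation variables) reads
\[
f_1^{10}=\frac16\sum_{\substack{a\equiv 1\ (\mathrm{mod}\ 6)\\ b\equiv 2\ (\mathrm{mod}\ 6)}} ab\,(b^2-a^2)\,q^{(a^2+b^2-5)/12},
\]
with $a,b$ running over all integers in these classes. It reproduces $1-10q+35q^2-30q^3-105q^4+238q^5+0\cdot q^6-260q^7+\cdots$. The argument is then:
\begin{itemize}
\item A pair $(a,b)$ contributes to $q^{11m+6}$ exactly when $a^2+b^2=11(12m+7)\equiv 0\pmod{11}$.
\item Since $-1$ is a nonresidue mod $11$, this forces $11\mid a$ and $11\mid b$.
\item Hence $11^4$ divides every term of the coefficient of $q^{11m+6}$ in $6f_1^{10}$, and $\gcd(6,11)=1$ finishes the proof.
\end{itemize}
This is your $\ell=7$ argument again, now with an anisotropic binary form. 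No Euler/Jacobi splitting of $f_1^{10}$ can supply such a form, since every such splitting has at least four variables.

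The Sturm-bound route (apply $U_{11}$ to $\eta(11\tau)^{11}/\eta(\tau)$ and check finitely many coefficients) also works. However, it brings in modular-forms machinery that the paper deliberately avoids.
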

These congruences, and others that have been proven for various partition functions since the time of Ramanujan, serve as the motivation for the work below.  

In order to describe the family of functions that we wish to study, we must provide a few more definitions.  

An overpartition of $n$ is an integer partition of $n$ where the first occurrence of any part may be overlined.  To illustrate, there are eight overpartitions of 3:
$$3\text{, } \OL{3}\text{, } 2+1\text{, }\OL{2}+1\text{, }2+\OL{1}\text{, }\OL{2}+\OL{1}\text{, }1+1+1, \text{ and } \OL{1}+1+1.$$
We define $\overline{p}(n)$ to be the number of overpartitions of $n$, so we have $\overline{p}(3)=8$ thanks to the example above.  
%%% ****************************

Overpartitions and their congruences have been well-studied; see \cite{Chen2015, Chen2016, CorteelLovejoy, Dasappa2024, Dou2017, overpartitionArith, overpartitionMod12, overpartitionMod5and9, KimOverpartMod128, ByungchanKimOverpartPairsModTwo,  Liang2025, Lin2015, Shomanov2025, Xia2017, Yang2017, Yao2018,  Zhang2025} for examples of such work. The generating function for $\overline{p}(n)$ is given by 
    $$\sum_{n\geq0}\OL{p}(n)q^n = \frac{(-q;q)_\infty}{(q;q)_\infty}=\frac{f_2}{f_1^2}.$$

In 2010, Hei-Chi Chan \cite{HeiChiChanBeautiful, HeiChiChanCubic} introduced the cubic partition function $b(n)$ in connection with Ramanujan's cubic continued fraction.  The generating function for $b(n)$ is given by 
\begin{equation}
\label{gen_fn_A}
B(q) := \sum_{n\geq 0} b(n) q^n = \frac{1}{(q;q)_\infty(q^2;q^2)_\infty} = \frac{1}{(q;q^2)_\infty(q^2;q^2)_\infty^2}.
\end{equation}
It is clear that $b(n)$ also counts the number of integer partitions of $n$ where each even part may appear in one of two colors. For example, $b(4) = 10$ thanks to the following:  
$$4_1\text{, } 4_2\text{, } 3+1\text{, } 2_1+2_1\text{, } 2_1+2_2\text{, } 2_2+2_1\text{, } 2_2+2_2\text{, } 2_1+1+1\text{, } 2_2+1+1, \text{ and } 1+1+1+1$$
Note that the subscripts represent the two possible colors for the even parts.  
Many congruences and related arithmetic properties satisfied by $b(n)$ have been proven  \cite{HeiChiChanBeautiful, HeiChiChanCubic, ChiranjitRay, JamesSellers, ZhaoCubic}.

Soon after the work of Chan appeared, Kim \cite{KimOvercubicMod3} combined the ideas of overpartitions and cubic partitions to define overcubic partitions.  As one might imagine, these are partitions where the first occurrence of any part may be overlined and each even part comes in one of two colors.
For example, there are twelve overcubic partitions of 3 which are
$$3\text{, } \OL{3}\text{, } 2_1+1\text{, }\OL{2}_1+1,2_1+\OL{1}\text{, }\OL{2}_1+\OL{1}\text{, } 2_2+1\text{, }\OL{2}_2+1,2_2+\OL{1}\text{, }\OL{2}_2+\OL{1}\text{, }1+1+1, \text{ and } \OL{1}+1+1.$$

Thus, if we define $\OL{b}(n)$ to be the number of overcubic partitions, then the above example yields $\OL{b}(3) = 12$.  The generating function for $\OL{b}(n)$ is given by 
\begin{equation}
\label{overcubic_gen_fn}
 \OL{B}(q)=\sum_{n\geq0}\OL{b}(n)q^n = \frac{f_4}{f^{2}_1f_2}.   
\end{equation}
As is the case for its counterparts $\OL{p}(n)$ and $b(n)$, the function $\OL{b}(n)$ satisfies many arithmetic properties; see, for example, \cite{HirschhornNoteOvercubic, KimOvercubicMod3, JamesSellers, CShivashankar}.

Prior to generalizing to $k$--tuples of partitions (which is the ultimate goal of this work), we pause here and highlight specific work that has been completed in the past related to $b(n)$ and $\OL{b}(n)$ which will prove useful below.  In order to describe these results, we must remind the reader of two of Ramanujan's theta functions, $\varphi(q)$ and $\psi(q)$, which are defined as follows:
\begin{equation}
\label{phi_sum_defn}
    \varphi(q) = 1 + 2\sum_{n\geq1}q^{n^2},
\end{equation}
\begin{equation}
\label{psi_sum_defn}
\psi(q) = \sum_{k\geq0} q^{\frac{k^2+k}{2}}.
\end{equation}
These two functions play a significant role in the work below.  Using these two theta functions, Sellers proved the following functional equation \cite[Theorem 2.1]{JamesSellers}.
\begin{theorem}
    We have $B(q) = \psi(q)\psi(q^2)B(q^2)^2$.
\end{theorem}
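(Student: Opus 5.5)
We will prove $B(q) = \psi(q)\psi(q^2)B(q^2)^2$ by writing both sides as eta-quotients in the notation $f_k=(q^k;q^k)_\infty$ and checking that they agree. The only inputs are the standard product representation of $\psi$ and the definition \eqref{gen_fn_A}.

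\textbf{Step 1: product form of $\psi$.} By Jacobi's triple product identity,
$$\psi(q)=\sum_{k\ge0}q^{k(k+1)/2}=\frac{(q^2;q^2)_\infty}{(q;q^2)_\infty}=\frac{f_2^2}{f_1}.$$
The second equality uses $(q;q^2)_\infty=f_1/f_2$. Replacing $q$ by $q^2$ gives $\psi(q^2)=f_4^2/f_2$. This is the one place where a genuine identity is needed rather than bookkeeping. It is classical, so we will quote it, or derive it from the triple product with $q\to q^{1/2}$ and $z=q^{1/2}$ if a self-contained argument is wanted.

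\textbf{Step 2: substitute.} From \eqref{gen_fn_A} we have $B(q)=1/(f_1f_2)$. Hence $B(q^2)=1/(f_2f_4)$ and $B(q^2)^2=1/(f_2^2f_4^2)$. The right-hand side then becomes
$$\psi(q)\psi(q^2)B(q^2)^2=\frac{f_2^2}{f_1}\cdot\frac{f_4^2}{f_2}\cdot\frac{1}{f_2^2f_4^2}=\frac{1}{f_1f_2}=B(q).$$

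The exponents of $f_4$ cancel, and the powers of $f_2$ reduce to $f_2^{-1}$, so the identity follows. There is no real obstacle beyond correctly recalling the product formula for $\psi(q)$. Everything else is cancellation of $q$-Pochhammer products, all of which converge absolutely for $|q|<1$.
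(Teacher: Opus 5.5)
Your proof is correct and takes essentially the same approach as the source of this result (the paper gives no proof of its own and cites Sellers): write $B(q)=1/(f_1f_2)$ and $\psi(q)=f_2^2/f_1$, substitute, and cancel the products. The one nontrivial input, $\psi(q)=f_2^2/f_1$, is recorded in the paper as \eqref{Psi Gen Fn}, so the argument is complete as written.
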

Through repeated application of this theorem, he proved the corollary below \cite[Corollary 2.2]{JamesSellers}.
\begin{corollary}
    We have $B(q) = \psi(q) \prod_{i\geq1}\psi(q^{2^i})^{3\cdot2^{i-1}}$.  
\end{corollary}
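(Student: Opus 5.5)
We obtain the corollary by iterating the functional equation $B(q)=\psi(q)\psi(q^2)B(q^2)^2$ of the preceding theorem and then passing to the limit. Substituting $q\mapsto q^2$ gives $B(q^2)=\psi(q^2)\psi(q^4)B(q^4)^2$. Inserting this into the original relation yields
\[
B(q)=\psi(q)\psi(q^2)^{3}\psi(q^4)^{2}B(q^4)^{4}.
\]
Applying the same substitution once more suggests the general identity, valid for every $N\geq 1$:
\[
B(q)=\psi(q)\left(\prod_{i=1}^{N-1}\psi(q^{2^i})^{3\cdot 2^{i-1}}\right)\psi(q^{2^N})^{2^{N-1}}B(q^{2^N})^{2^N}.
\]
We prove this by induction on $N$. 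The case $N=1$ is the theorem itself. For the inductive step, we replace $q$ by $q^{2^N}$ in the theorem, which gives
\[
B(q^{2^N})^{2^N}=\psi(q^{2^N})^{2^N}\psi(q^{2^{N+1}})^{2^N}B(q^{2^{N+1}})^{2^{N+1}}.
\]
The power of $\psi(q^{2^N})$ then becomes $2^{N-1}+2^N=3\cdot 2^{N-1}$, which is exactly the required exponent. The new trailing factors are $\psi(q^{2^{N+1}})^{2^N}B(q^{2^{N+1}})^{2^{N+1}}$, as the formula predicts.

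Next we let $N\to\infty$, working with formal power series in $q$ (or analytically for $|q|<1$). Both $\psi(q^{2^N})$ and $B(q^{2^N})$ have constant term $1$ and otherwise involve only powers $q^{m}$ with $m\geq 2^N$. Hence the trailing factor $\psi(q^{2^N})^{2^{N-1}}B(q^{2^N})^{2^N}$ is congruent to $1$ modulo $q^{2^N}$. Likewise, each factor $\psi(q^{2^i})$ with $i\geq N$ is $\equiv 1 \pmod{q^{2^N}}$, so the infinite product $\prod_{i\geq1}\psi(q^{2^i})^{3\cdot2^{i-1}}$ converges $q$-adically. Its truncation at $N-1$ agrees with the full product modulo $q^{2^N}$.

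Comparing coefficients of $q^n$ for any fixed $n<2^N$ therefore shows that $B(q)$ and $\psi(q)\prod_{i\geq1}\psi(q^{2^i})^{3\cdot 2^{i-1}}$ agree up to $q^{2^N-1}$. Since $N$ is arbitrary, they are equal.

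The only real care needed is the bookkeeping of the exponent $3\cdot 2^{i-1}$ in the induction and the justification of the limit. The limit is routine because every factor is $1+O(q^{2^i})$.
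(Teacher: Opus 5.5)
Your proof is correct and follows the same route the paper indicates: the corollary comes from repeatedly applying $B(q)=\psi(q)\psi(q^2)B(q^2)^2$. Your induction on $N$, with the exponent bookkeeping $2^{N-1}+2^N=3\cdot 2^{N-1}$, and your $q$-adic limit argument simply make that ``repeated application'' rigorous.
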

Sellers also proved a functional equation for $\OL{B}(q)$ similar to the one for $B(q)$ \cite[Theorem 2.1]{JamesSellers}.
\begin{theorem}
\label{b_1 phi recurrence}
    We have $\OL{B}(q) = \varphi(q)\varphi(q^2)\OL{B}(q^2)^2$.
\end{theorem}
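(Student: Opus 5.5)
We prove the identity $\OL{B}(q) = \varphi(q)\varphi(q^2)\OL{B}(q^2)^2$ directly from product forms. Both sides will be written in terms of the $f_k$ and compared.

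First we need the product form of $\varphi(q)$. By Jacobi's triple product identity,
$$\varphi(q) = \frac{f_2^5}{f_1^2 f_4^2}.$$
Equivalently, this is Gauss's classical evaluation $\varphi(-q) = f_1^2/f_2$ with $q\mapsto -q$, combined with the standard fact that $q\mapsto -q$ sends $f_1$ to $f_2^3/(f_1f_4)$. Replacing $q$ by $q^2$ gives
$$\varphi(q^2) = \frac{f_4^5}{f_2^2 f_8^2}.$$
This product form is the only nonelementary input. The main (modest) obstacle is simply quoting or verifying it correctly, including the effect of $q\mapsto -q$ on $f_1$.

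Next we expand the right-hand side. From \eqref{overcubic_gen_fn},
$$\OL{B}(q^2) = \frac{f_8}{f_2^2 f_4}, \qquad\text{so}\qquad \OL{B}(q^2)^2 = \frac{f_8^2}{f_2^4 f_4^2}.$$
Multiplying the three factors gives
$$\frac{f_2^5}{f_1^2 f_4^2}\cdot \frac{f_4^5}{f_2^2 f_8^2}\cdot \frac{f_8^2}{f_2^4 f_4^2}.$$
Collecting exponents: $f_1$ has exponent $-2$, $f_2$ has $5-2-4=-1$, $f_4$ has $-2+5-2=1$, and $f_8$ has $-2+2=0$.

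The product therefore equals $f_4/(f_1^2 f_2)$, which is $\OL{B}(q)$ by \eqref{overcubic_gen_fn}. This completes the proof.
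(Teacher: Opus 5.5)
Your proof is correct. Using $\varphi(q)=f_2^5/(f_1^2f_4^2)$, which the paper records as Equation \eqref{Phi Gen Fn} and which follows, as you say, from $\varphi(-q)=f_1^2/f_2$ and Lemma \ref{Negative q}, the exponents of $f_1,f_2,f_4,f_8$ on the right-hand side are indeed $-2,-1,1,0$, giving $f_4/(f_1^2f_2)=\OL{B}(q)$. The paper gives no proof of its own here and simply quotes the result from Sellers \cite[Theorem 2.1]{JamesSellers}; your direct product-form verification is a complete, self-contained argument of exactly the elementary kind behind that citation.
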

Through repeated application of this theorem, he proved this corollary  \cite[Corollary 2.3]{JamesSellers}.
\begin{corollary}
\label{b_1 phi defn}
    We have $\OL{B}(q) = \varphi(q) \prod_{i\geq1}\varphi(q^{2^i})^{3\cdot2^{i-1}}$.
\end{corollary}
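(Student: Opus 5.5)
The plan is to iterate Theorem~\ref{b_1 phi recurrence} and pass to the limit, tracking the exponents that build up at each stage. Writing $\OL{B}(q)=\varphi(q)\varphi(q^2)\OL{B}(q^2)^2$ and substituting $q\mapsto q^2$ into the same identity gives
\[
\OL{B}(q)=\varphi(q)\varphi(q^2)\bigl(\varphi(q^2)\varphi(q^4)\OL{B}(q^4)^2\bigr)^2=\varphi(q)\varphi(q^2)^3\varphi(q^4)^2\OL{B}(q^4)^4 .
\]
I will prove by induction on $N\geq 1$ the identity
\[
\OL{B}(q)=\varphi(q)\left(\prod_{i=1}^{N-1}\varphi(q^{2^i})^{3\cdot 2^{i-1}}\right)\varphi(q^{2^N})^{2^{N-1}}\,\OL{B}(q^{2^N})^{2^N}.
\]
The case $N=1$ is exactly Theorem~\ref{b_1 phi recurrence}.

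For the inductive step, I apply Theorem~\ref{b_1 phi recurrence} with $q$ replaced by $q^{2^N}$:
\[
\OL{B}(q^{2^N})^{2^N}=\varphi(q^{2^N})^{2^N}\varphi(q^{2^{N+1}})^{2^N}\OL{B}(q^{2^{N+1}})^{2^{N+1}}.
\]
Then $\varphi(q^{2^N})$ picks up total exponent $2^{N-1}+2^N=3\cdot 2^{N-1}$, which is the required factor in the product. The new trailing factor is $\varphi(q^{2^{N+1}})^{2^N}$, matching the claimed form at $N+1$.

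The only real point is the limit $N\to\infty$, which I treat as a statement about formal power series in $q$. Both $\varphi(q^{2^N})$ and $\OL{B}(q^{2^N})$ are $1+O(q^{2^N})$; for $\OL{B}$ this holds because $\OL{b}(0)=1$. Hence the trailing factor $\varphi(q^{2^N})^{2^{N-1}}\OL{B}(q^{2^N})^{2^N}$ is $\equiv 1 \pmod{q^{2^N}}$. Similarly each factor $\varphi(q^{2^i})^{3\cdot 2^{i-1}}$ with $i\geq N$ is $\equiv 1 \pmod{q^{2^N}}$, so the infinite product converges formally.

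Therefore, for every $N$, $\OL{B}(q)$ and $\varphi(q)\prod_{i\geq1}\varphi(q^{2^i})^{3\cdot2^{i-1}}$ agree modulo $q^{2^N}$, and letting $N\to\infty$ shows the two series are equal. Analytically the same argument works for $|q|<1$, since the trailing factor tends to $1$. No step is a serious obstacle; the exponent bookkeeping in the induction is the only place requiring care.
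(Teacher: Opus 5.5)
Your proof is correct and takes essentially the same approach as the paper, which obtains this corollary (following Sellers) by repeated application of Theorem \ref{b_1 phi recurrence}. Your induction with the explicit trailing factor $\varphi(q^{2^N})^{2^{N-1}}\OL{B}(q^{2^N})^{2^N} \equiv 1 \pmod{q^{2^N}}$ simply makes that iteration and the passage to the infinite product rigorous.
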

Theorem \ref{b_1 phi recurrence} and Corollary \ref{b_1 phi defn} will appear again in the discussion below.  
In addition, Sellers proved the following characterizations modulo 2,4, and 8 \cite [Theorem 2.4--2.6]{JamesSellers}.
\begin{theorem}
    For all $n\geq1$, $\OL{b}(n)\equiv0\pmod{2}$.
\end{theorem}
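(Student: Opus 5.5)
The plan is to read the congruence straight off Corollary \ref{b_1 phi defn}, which expresses $\OL{B}(q)$ as a product of copies of $\varphi$:
$$\OL{B}(q) = \varphi(q)\prod_{i\geq 1}\varphi(q^{2^i})^{3\cdot 2^{i-1}}.$$
By the definition \eqref{phi_sum_defn}, $\varphi(q) = 1 + 2\sum_{n\geq 1} q^{n^2}$. Hence $\varphi(q)\equiv 1 \pmod 2$ as a formal power series with integer coefficients. Replacing $q$ by $q^{2^i}$ preserves this, so $\varphi(q^{2^i}) \equiv 1 \pmod 2$ for every $i\geq 1$.

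Next I must justify multiplying these congruences across the infinite product. Working modulo $2$, the coefficient of $q^n$ in the product depends only on the finitely many factors with $2^i\leq n$. Every other factor is $1+O(q^{2^i})$ with $2^i>n$, so it does not affect coefficients up to $q^n$. A finite product of series congruent to $1$ modulo $2$ is again congruent to $1$ modulo $2$. Therefore
$$\OL{B}(q)\equiv 1 \pmod 2,$$
which says exactly that $\OL{b}(0)=1$ and $\OL{b}(n)\equiv 0 \pmod 2$ for all $n\geq 1$.

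As a cross-check, the same conclusion follows without the corollary from the product $\OL{B}(q) = f_4/(f_1^2 f_2)$. Since $f_1^2\equiv f_2$ and $f_2^2\equiv f_4 \pmod 2$, we get $\OL{B}(q)\equiv f_4/f_4 = 1 \pmod 2$. The argument has no genuine obstacle. The only point needing care is the convergence remark that lets the infinite product be reduced modulo $2$ coefficientwise.
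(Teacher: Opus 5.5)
Your proof is correct and is essentially the argument the paper has in mind: the paper only cites Sellers for this result, but its Section~3 perspective (as in the proof of Theorem~\ref{Mod 4 Characterization}) is exactly to read such congruences off the product in Corollary~\ref{b_1 phi defn} using $\varphi(q)\equiv 1 \pmod 2$, and your truncation remark correctly handles the infinite product. Your cross-check via $f_1^2\equiv f_2$ and $f_2^2\equiv f_4 \pmod 2$ (Lemma~\ref{Expanded Freshman's Gn Fn} with $p=2$) is also valid, since each $f_k$ has constant term $1$ and is therefore invertible in $\Z[[q]]$, so dividing by these congruent factors is legitimate.
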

\begin{theorem}\label{b_1 mod 4 characterization}
    For all $n\geq1$,
    \[ \OL{b}(n) \equiv \begin{cases}
                    2 \pmod{4} \text{ if } n = l^2 \text{ or } n = 2l^2 \text{, } l\in\Z \text{ and} \\
                    0 \pmod{4} \text{ otherwise}.
                    \end{cases} \]
\end{theorem}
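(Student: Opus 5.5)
Starting from Corollary \ref{b_1 phi defn}, we write
$$\OL{B}(q) = \varphi(q)\varphi(q^2)^3 \prod_{i\geq2}\varphi(q^{2^i})^{3\cdot2^{i-1}}.$$
Modulo $4$ the key input is $\varphi(q) = 1 + 2\sum_{n\geq1}q^{n^2} = 1 + 2A(q)$, where $A(q)=\sum_{n\geq 1} q^{n^2}$. For every $i\geq2$ the exponent $3\cdot 2^{i-1}$ is even, so the corresponding factor is a power of $\varphi(q^{2^i})^2 = 1 + 4A(q^{2^i}) + 4A(q^{2^i})^2$. Each such square is $\equiv 1 \pmod 4$, and therefore the whole infinite product over $i\geq 2$ is congruent to $1$ modulo $4$. (Convergence as formal power series causes no trouble, since each factor is $1+O(q^{2^i})$.)

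This reduces the problem to $\OL{B}(q)\equiv \varphi(q)\varphi(q^2)^3 \pmod 4$. First, $\varphi(q^2)^3 = (1+2A(q^2))^3 \equiv 1 + 6A(q^2) \equiv 1 + 2A(q^2) \pmod 4$, because the terms $3\cdot 4A^2$ and $8A^3$ vanish modulo $4$. Next,
$$\varphi(q)\varphi(q^2)^3 \equiv (1+2A(q))(1+2A(q^2)) \equiv 1 + 2A(q) + 2A(q^2) \pmod 4,$$
since the cross term $4A(q)A(q^2)$ vanishes. Hence
$$\OL{B}(q)\equiv 1 + 2\sum_{l\geq1} q^{l^2} + 2\sum_{l\geq 1} q^{2l^2}\pmod 4.$$

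Finally we read off coefficients for $n\geq 1$. The integers $n=l^2$ and $n=2l^2$ can never coincide for positive $l$, because $\sqrt2$ is irrational; equivalently, the $2$-adic valuation of $n$ is even in the first case and odd in the second. Each such $n$ is also hit by exactly one $l\geq1$. So the coefficient of $q^n$ is $2$ when $n$ is a square or twice a square, and $0$ otherwise, modulo $4$. Allowing $l\in\Z$ in the statement only adds the symmetric $\pm l$ and does not change the set of $n$.

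The main point needing care is the non-overlap of the two square families. Without it, some $n$ would receive the contribution $2+2\equiv 0$, and the case split in the statement would be wrong. The rest is routine binomial reduction.
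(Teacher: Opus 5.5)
Your proof is correct and is essentially the paper's argument: the paper quotes this result from Sellers and then proves its generalization (Theorem \ref{Mod 4 Characterization}, whose odd-$k$ case with $k=1$ is this statement) exactly as you do, by discarding the factors $\varphi(q^{2^i})^{3\cdot 2^{i-1}}$ for $i\geq 2$ modulo $4$ and reducing $\varphi(q)\varphi(q^2)^3$ to $(1+2\sum_{l\geq1}q^{l^2})(1+2\sum_{l\geq1}q^{2l^2})$. Your explicit check that $l^2=2m^2$ has no solutions in positive integers fills in a point the paper leaves implicit.
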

\begin{theorem}\label{b_1 mod 8 characterization}
    For all $n\geq 1$,\[ \OL{b}(n) \equiv \begin{cases}
                    2 \pmod{8} \text{ if } n = l^2 \text{ or } n = 2(2l)^2 \text{, } l\in\Z  \text{ with } l\geq1, \\
                    6 \pmod{8} \text{ if } n = 2(2l-1)^2 \text{, } l\in\Z  \text{ with } l\geq1, \\
                    4 \pmod{8} \text{ if } n = l^2+j^2 \text{, } l,j\in\Z  \text{ with } l,j\geq1\text{, and} \\
                    0 \pmod{4} \text{ otherwise}.
                    \end{cases} \]
\end{theorem}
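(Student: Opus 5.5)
The plan is to work directly from Corollary \ref{b_1 phi defn}, which gives $\OL{B}(q) = \varphi(q)\varphi(q^2)^3\prod_{i\geq 2}\varphi(q^{2^i})^{3\cdot 2^{i-1}}$, and reduce every factor modulo $8$. Write $T(q) = \sum_{n\geq 1} q^{n^2}$, so that $\varphi(q) = 1+2T(q)$ by \eqref{phi_sum_defn}. I will use two standard facts throughout. First, $(1+2X)^{4m}\equiv 1 \pmod 8$. Second, $T(q)^2 \equiv T(q^2) \pmod 2$, so $4T(q)^2\equiv 4T(q^2)\pmod 8$.

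\emph{Step 1: the factors with $i\geq 3$.} For $i\geq 3$ the exponent $3\cdot 2^{i-1}$ is divisible by $4$. Hence each of these factors is $\equiv 1 \pmod 8$ and can be discarded.

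\emph{Step 2: the factors with $i=2$ and $i=1$.} For $i=2$ the exponent is $6$, and the binomial theorem gives
\[
(1+2T(q^4))^6 \equiv 1+12T(q^4)+60T(q^4)^2 \equiv 1+4T(q^4)+4T(q^8)\pmod 8 .
\]
Similarly,
\[
\varphi(q^2)^3 \equiv 1+6T(q^2)+12T(q^2)^2 \equiv 1+6T(q^2)+4T(q^4)\pmod 8 .
\]

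\emph{Step 3: multiply out.} Multiplying by $1+2T(q)$ and discarding every product that carries a factor of $8$, the two copies of $4T(q^4)$ cancel. This leaves
\[
\OL{B}(q)\equiv 1+2T(q)+6T(q^2)+4T(q^8)+4T(q)T(q^2)\pmod 8 .
\]

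\emph{Step 4: read off the cases.} The coefficient of $q^n$ is obtained term by term.
\begin{itemize}
\item The term $2T(q)$ contributes $2$ when $n=l^2$.
\item The term $6T(q^2)$ contributes $6$ when $n=2l^2$.
\item The term $4T(q^8)$ contributes $4$ when $n=8l^2=2(2l)^2$. Combined with the $6$ above, this turns the residue at $n=2(2l)^2$ into $2$, while $n=2(2l-1)^2$ keeps the value $6$. This gives exactly the first two lines of Theorem \ref{b_1 mod 8 characterization}.
\item The cross term $4T(q)T(q^2)$ contributes $4$ times the number of pairs $(l,j)$ with $l,j\geq1$ realizing $n$ as a square plus twice a square.
\end{itemize}
At this point the "otherwise $\equiv 0\pmod 4$" clause is immediate, since every remaining contribution is a multiple of $4$.

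\emph{Step 5: the main obstacle.} The hard step is the third line of the statement: matching the support of the cross term $4T(q)T(q^2)$ with the set $\{l^2+j^2 : l,j\geq 1\}$ as worded. This needs an identification between representations $n=a^2+2b^2$ and representations $n=l^2+j^2$ that preserves parity of the count. It also requires checking that whenever $n$ is itself $l^2$ or $2l^2$, the cross term does not alter the residues already assigned.

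I would first test the third line numerically on small $n$ against the coefficients of the displayed product. I expect this check to show that the set on which $\OL{b}(n)\equiv 4 \pmod 8$ is governed by $l^2+2j^2$ rather than $l^2+j^2$; for instance, $n=3=1+2\cdot 1$ is not a sum of two squares. If so, the statement as worded cannot be proved in that clause. The argument above would instead establish the first two lines and the final clause exactly, together with a corrected form of the third line.
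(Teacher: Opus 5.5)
\textbf{The reduction is right; Step~4's reading is wrong.} The paper gives no proof of this statement. It is quoted from Sellers, and the only hint is the remark after Theorem~\ref{Mod 4 Characterization} that the $\varphi$-product of Corollary~\ref{b_1 phi defn} could be pushed to modulus $8$. So your route is the intended one, and your reduction $\OL{B}(q)\equiv 1+2T(q)+6T(q^2)+4T(q^8)+4T(q)T(q^2)\pmod 8$ is correct.

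The error is in Step~4, where you say the first three terms ``give exactly the first two lines.'' That assumes the cross term $4T(q)T(q^2)$ vanishes on squares and on doubled squares. This is exactly the check you deferred to Step~5, and it fails:
\begin{itemize}
\item Since $9=1^2+2\cdot 2^2$, your own formula gives $\OL{b}(9)\equiv 2+4\equiv 6\pmod 8$. Indeed $\OL{b}(9)=470$, which contradicts the first line.
\item Since $18=2\cdot 3^2=4^2+2\cdot 1^2$, your formula gives $\OL{b}(18)\equiv 6+4\equiv 2\pmod 8$. This contradicts the second line, which assigns $6$.
\end{itemize}
So your closing claim that the argument establishes the first two lines exactly is false. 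Those lines fail as worded, just like the third.

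\textbf{Your witness for the third line is not a counterexample.} The final clause only asserts $\OL{b}(n)\equiv 0\pmod 4$, and $\OL{b}(3)=12$ satisfies it. Genuine failures of the third line are the following.
\begin{itemize}
\item $n=5=1^2+2^2$, where $\OL{b}(5)=48\equiv 0\pmod 8$.
\item The built-in inconsistency from $2l^2=l^2+l^2$. For example, $n=2$ is assigned both $6$ and $4$, and in fact $\OL{b}(2)=6$.
\end{itemize}
The only clause of the statement that survives is ``otherwise $\equiv 0\pmod 4$,'' and that is just Theorem~\ref{b_1 mod 4 characterization}.

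\textbf{What your computation actually proves.} Use $\varphi(q)\varphi(q^2)=1+2T(q)+2T(q^2)+4T(q)T(q^2)$ and $4T(q^2)+4T(q^8)\equiv 4\sum_{l\geq 1}q^{2(2l-1)^2}\pmod 8$. Your congruence then becomes
\[
\OL{b}(n)\equiv r(n)+4\,\delta(n)\pmod 8,\qquad r(n)=\#\{(x,y)\in\Z^2:\ x^2+2y^2=n\},
\]
where $\delta(n)=1$ if $n=2(2l-1)^2$ for some $l\geq 1$, and $\delta(n)=0$ otherwise. You should state this, or equivalently your coefficient formula with $4$ times the parity of the number of positive solutions of $a^2+2b^2=n$. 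Do not assert that the first two lines hold.
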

These results will be fully generalized when we transition to overcubic partition $k$--tuples, the primary set of combinatorial objects under consideration in this work.  

A partition $k$-tuple of weight $n$ is a list of $k$ different partitions whose total sum is $n$.  Note that some of the $k$ partitions in such a $k$-tuple may be empty; in that case, we denote such an empty partition with the symbol $\emptyset$.  To illustrate, there are sixteen overcubic partition doubles (or 2--tuples) of weight 2 which are given as follows:
\begin{align*}
    &(2_1,\emptyset)\text{, } (\OL{2}_1,\emptyset)\text{, } (2_2,\emptyset)\text{, } (\OL{2}_2,\emptyset)\text{, } (\emptyset,2_1)\text{, }  (\emptyset,\OL{2}_1)\text{, } (\emptyset,2_2)\text{, } (\emptyset,\OL{2}_2), \\
    &(1+1,\emptyset)\text{, }  (\OL{1}+1,\emptyset)\text{, } (1,1)\text{, } (\OL{1},1)\text{, } (1,\OL{1})\text{, } (\OL{1},\OL{1})\text{, }  (\emptyset,1+1),\text{ and } (\emptyset,\OL{1}+1).
\end{align*}

% Note that partition tuple functions can be made for any of the previous partitions and have been extensively studied \cite{ByungchanKimOverpartPairsModTwo, DasOverpartOdd, ChiranjitRay, LinCubicPairsMod27, ChernCubicPairsMod3, LinCubicPairsMod3, ChenOverPartPairs}.  

Based on the generating function for $b(n)$ given in Equation \eqref{overcubic_gen_fn}, it should be clear that the generating function for $\OL{b}_k(n)$, the number of overcubic partition $k$-tuples of weight $n$, is given by 
\begin{equation}
\label{overcubic-k-tuple_gen_fn}
\OL{B}_k(q)=\sum_{n\geq0}\OL{b}_k(n)q^n = \frac{f^k_4}{f^{2k}_1f^k_2}
\end{equation}
with $\OL{b}_0(n) = 0$ for all $n\geq 1$.

The above functions have received recent attention for small values of $k$.  For example, many congruences for the function $\OL{b}_2(n)$ \cite{ByungchanKimOvercubicPairs, ChiranjitRay, ShivashankarOvercubicPairs} and the function $\OL{b}_3(n)$ \cite{JiayuChen, TheOG, ManjilSaikia} have appeared in the literature.  Most of these congruences have moduli which are small powers of 2.  At the time of this writing, only one paper appears to have been published which deals with the function $\OL{b}_k(n)$ for arbitrary $k$, which is the work of Buragohain and Saikia \cite{PujashreeBuragohain}.  In \cite{PujashreeBuragohain}, the focus is on congruences with moduli which are powers of 2.  As part of the work that we share below, we revisit such results from a new perspective by rewriting the generating functions in question in terms of Ramanujan's theta function $\varphi(q)$. 

Here are a few congruence results which have appeared in the literature recently for the case $k=3$ \cite{JiayuChen, TheOG, ManjilSaikia}.  

\begin{theorem}
For all $m, \alpha \geq 0$,
    \begin{align}
        \OL{b}_3(128m+80) &\equiv 0 \pmod{4},\\
        \OL{b}_3(256\cdot 2^\alpha m+192) &\equiv 0 \pmod{4},\\
        \OL{b}_3(32m+20) &\equiv 0 \pmod{32}, \\
        \OL{b}_3(32m+28) &\equiv 0 \pmod{64}, \text{ and} \\
        \OL{b}_3(2^\alpha(8m+7)) &\equiv 0 \pmod{128}.
    \end{align}
\end{theorem}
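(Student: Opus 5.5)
We would prove all five congruences from the $\varphi$-representation of the generating function. Cubing Corollary \ref{b_1 phi defn} gives
\[
\OL{B}_3(q)=\OL{B}(q)^3=\varphi(q)^3\prod_{i\geq1}\varphi(q^{2^i})^{9\cdot 2^{i-1}}.
\]
Cubing Theorem \ref{b_1 phi recurrence} gives the functional equation
\[
\OL{B}_3(q)=\varphi(q)^3\varphi(q^2)^3\,\OL{B}_3(q^2)^2 .
\]
The working tools are the following standard identities:
\begin{itemize}
\item the 2-dissection $\varphi(q)=\varphi(q^4)+2q\psi(q^8)$;
\item its consequence $\varphi(q)^2=\varphi(q^2)^2+4q\psi(q^4)^2$, together with $\varphi(q)\varphi(-q)=\varphi(-q^2)^2$;
\item the elementary fact that $\varphi(q)\equiv 1 \pmod 2$. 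Writing $\varphi(q)=1+2X$ gives $\varphi(q)^{2^j}\equiv 1\pmod{2^{j+1}}$.
\end{itemize}
The last fact also gives the sharper $\varphi(q)^{2^j}\equiv\varphi(q^2)^{2^j}\pmod{2^{j+2}}$, obtained by raising the second identity to the power $2^{j-1}$. These let us truncate the infinite product modulo $4$, $32$, $64$ or $128$. Every factor $\varphi(q^{2^i})^{9\cdot 2^{i-1}}$ with $i$ large is $\equiv 1$. The remaining factors are handled by pairing powers: for example, $\varphi(q^2)^9\varphi(q^4)^{18}\cdots$ collapses modulo $2^N$ to a short product of $\varphi$'s in the variables $q^{2^i}$ for only the first few $i$.

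For the progressions $128m+80$, $32m+20$ and $32m+28$, we would repeatedly extract even or odd parts. First we use the 2-dissection of $\varphi(q)^3$. Its even part is
\[
\varphi(q^4)^3+12q^2\varphi(q^4)\psi(q^8)^2 .
\]
Combining this with the functional equation gives
\[
\sum_{n\geq 0}\OL{b}_3(2n)q^n=\bigl(\varphi(q^2)^3+12q\,\varphi(q^2)\psi(q^4)^2\bigr)\varphi(q)^3\,\OL{B}_3(q)^2 .
\]
We then reduce modulo the target power of $2$ and dissect again, until the desired residue class is isolated. At that point every surviving term should visibly carry the required power of $2$: powers of $2$ accumulate through the factors $2q\psi$ produced in each dissection. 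The family $\OL{b}_3(256\cdot 2^\alpha m+192)\equiv 0\pmod 4$ is handled modulo $4$. There the product reduces to essentially $\varphi(q)\varphi(q^2)$ up to squares, in the spirit of Theorem \ref{b_1 mod 4 characterization}. We would show that the generating function for the relevant progression is, modulo $4$, invariant under $q\mapsto q^2$ along the chain. Induction on $\alpha$ then follows.

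The last congruence, $\OL{b}_3(2^\alpha(8m+7))\equiv 0\pmod{128}$, is the heart of the matter. The arithmetic input is that $\varphi(q)^3=\sum r_3(n)q^n$ and $r_3(4^a(8m+7))=0$ by Legendre's three-square theorem. We would write
\[
\OL{B}_3(q)=\varphi(q)^3G(q^2)\quad\text{with}\quad G(q)=\varphi(q)^3\OL{B}_3(q)^2 .
\]
Modulo $128$, we would reduce $G$ via the truncation above, so that the coefficient of $q^{8m+7}$ becomes a combination of coefficients of products of the form $\varphi(q)^3\varphi(q^2)^{a}\varphi(q^4)^{b}$. Each such coefficient either vanishes by the three-square theorem or carries a factor $2^7$ coming from the $\psi$-terms. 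For $\alpha=0$ this is a finite dissection computation. For the induction step in $\alpha$ we would use the displayed formula for $\sum_{n\geq0}\OL{b}_3(2n)q^n$. The aim is to show that, on the class $8m+7$ and modulo $128$, it agrees with $\OL{B}_3(q)$ times a series in $q^2$ (or $q^8$), so that the property propagates from $\alpha$ to $\alpha+1$.

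I expect this propagation modulo $128$ to be the main obstacle. Tracking seven powers of $2$ through repeated dissections is delicate. The first thing I would try is to factor $\OL{B}_3(q)$ as $\varphi(q)^3$ times a series in $q^2$ that is congruent modulo $128$ to a series in $q^8$. With such a factorization, the class $8m+7$ is governed by $r_3$ alone.
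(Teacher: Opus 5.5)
\textbf{The plan breaks down at the last congruence, and it has to, because that congruence is false as printed.} Take $\alpha=m=0$. By your own representation, $\OL{b}_3(7)$ is the coefficient of $q^7$ in $\varphi(q)^3\varphi(q^2)^9\varphi(q^4)^{18}$, namely
\[
r_3(5)r_9(1)+r_3(3)r_9(2)+r_3(1)r_9(3)+r_{18}(1)\bigl(r_3(3)+r_3(1)r_9(1)\bigr)=432+1152+4032+4176=9792=2^6\cdot 153 .
\]
Expanding $f_4^3/(f_1^6f_2^3)$ directly gives the same value, $6372+3\cdot 918+6\cdot 98+13\cdot 6$. Likewise $\OL{b}_3(14)=3505728=2^6\cdot 54777$, so $\alpha=1$ fails too. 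The Legendre term $r_3(7)$ does vanish, but the remaining terms are not all divisible by $2^7$; for instance $r_3(1)r_9(3)=2^6\cdot 63$. So the step ``each coefficient either vanishes by the three-square theorem or carries $2^7$'' is exactly what fails. Your fallback factorization also fails at once: $G(q)=\varphi(q)^9\varphi(q^2)^{18}\cdots$ has coefficient $18$ at $q$, so $G(q^2)$ is not congruent modulo $128$ to a series in $q^8$. No power of $2$ beyond $2^6$ can divide every $\OL{b}_3(8m+7)$. Note that the paper does not prove the last three congruences at all: it quotes them from the literature, and it proves only the first two.

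\textbf{Smaller errors.} The ``sharper'' congruence $\varphi(q)^{2^j}\equiv\varphi(q^2)^{2^j}\pmod{2^{j+2}}$ is false: for $j=1$, $\varphi(q)^2-\varphi(q^2)^2=4q\psi(q^4)^2\not\equiv 0\pmod 8$. Lifting only gives modulus $2^{j+1}$, which adds nothing to $\varphi(q)^{2^j}\equiv 1$. So modulo $2^N$ you must keep every factor $\varphi(q^{2^i})^{9\cdot 2^{i-1}}$ with $i<N$; there is no collapse. More importantly, for the moduli $32$ and $64$ the surviving terms do not visibly carry the required power of $2$. Carrying out the dissections, using also $\psi(q)^2=\varphi(q)\psi(q^2)$, the odd parts of $\sum_n\OL{b}_3(16n+4)q^n$ and $\sum_n\OL{b}_3(16n+12)q^n$ come out respectively as
\[
16q\psi(q^2)\bigl(\psi(q^8)+\psi(q^4)^2\bigr)\pmod{32}\quad\text{and}\quad 32q\psi(q^2)\psi(q^4)\bigl(\psi(q^8)+\psi(q^4)^2\bigr)\pmod{64}.
\]
Each summand is one factor of $2$ short. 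The congruences for $32m+20$ and $32m+28$ hold only because $\psi(q^4)^2\equiv\psi(q^8)\pmod 2$, and that cancellation is the idea missing from your sketch. Finally, the first two congruences need neither dissection nor induction on $\alpha$. The paper uses that modulo $4$, $\OL{B}_3(q)\equiv\varphi(q)\varphi(q^2)$ (Theorem \ref{Mod 4 Characterization}). So it suffices that $16(8m+5)$ and $64(4m+3)$ are never squares or twice squares, and the progression $256\cdot 2^\alpha m+192$ lies inside $64(4m+3)$.
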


With the above in mind, our main interest is to study arithmetic properties satisfied by $\OL{b}_k(n)$ for a variety of values of $k$.  In particular, we wish to provide a new, unified perspective on the various congruences modulo small powers of 2 satisfied by $\OL{b}_k(n)$ which appear in the literature, and we will also prove several infinite families of congruences satisfied by these functions for moduli which are not powers of 2.  (We note in passing that, for $k\geq 4$, there are no congruences which appear in the literature for the functions $\OL{b}_k(n)$ with moduli which are not powers of 2.)   
In Section \ref{sec:preliminaries}, we will provide a list of all of the $q$-series results that we need to prove our results.  In Section \ref{sec:mod2Perspective}, we will discuss our new perspective on the various congruences modulo powers of 2 that have been proven recently for this family of functions, while in Section \ref{sec:OddModuli}, we will prove a number of congruences with moduli which are odd.  We share some concluding remarks in Section \ref{sec:concluding_thoughts}.  All of the proof techniques utilized below are elementary, relying on well--known $q$-series results and standard dissection techniques for manipulating generating functions.

\section{Preliminaries}
\label{sec:preliminaries}

As noted above, we will collect all of our required tools in this section.  We begin with one of the most classical of all the $q$-series results in this field.  

\begin{theorem}[Euler's Pentagonal Number Theorem]
\label{Euler_pentagonal_number_thm}
    We have
    $$f_1 = (q;q)_\infty =\sum_{k=-\infty}^\infty(-1)^kq^{\frac{3k^2 + k}{2}}.$$
    \end{theorem}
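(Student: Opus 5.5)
The natural route to Euler's Pentagonal Number Theorem is through the Jacobi triple product identity,
$$\sum_{n=-\infty}^{\infty} z^n q^{n^2} = \prod_{m\geq 1}(1-q^{2m})(1+zq^{2m-1})(1+z^{-1}q^{2m-1}),$$
valid for $|q|<1$ and $z\neq 0$. If we assume this classical identity, the pentagonal number theorem is a direct specialization. Replace $q$ by $q^{3/2}$ and set $z=-q^{1/2}$. The left side then becomes $\sum_{k}(-1)^k q^{\frac{3k^2+k}{2}}$. On the right, the three factors become $(1-q^{3m})$, $(1-q^{3m-1})$ and $(1-q^{3m-2})$. Taken together over all $m\geq 1$, these run through $1-q^j$ exactly once for each $j\geq 1$, which yields $(q;q)_\infty=f_1$.

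To keep the argument self-contained, I would first prove the triple product in the usual way. Let $F(z)=\prod_{m\geq1}(1+zq^{2m-1})(1+z^{-1}q^{2m-1})$, which for fixed $|q|<1$ is analytic in $z\neq 0$. Its Laurent expansion $F(z)=\sum_n c_n(q)z^n$ satisfies the functional equation $F(zq^2)=(zq)^{-1}F(z)$, which one checks by shifting the index in each product. Comparing coefficients gives the recursion $c_{n}=q^{2n-1}c_{n-1}$, hence $c_n=q^{n^2}c_0(q)$. The remaining task is to show $c_0(q)=1/\prod_{m\geq1}(1-q^{2m})$.

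Determining $c_0$ is the main obstacle. I would do it by evaluating $F$ at special points: setting $z=i$ (or using $q\mapsto q^4$, $z=-q$ tricks) gives an identity relating $c_0(q)$ to $c_0(q^4)$. Iterating that identity and using $c_0(q)\to 1$ as $q\to 0$ pins down $c_0$. An alternative is a purely combinatorial argument: interpret $c_0(q)$ as a generating function for pairs of distinct-odd-part partitions with equal numbers of parts, and biject these with ordinary partitions into even parts.

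Alternatively, one could bypass the triple product entirely and give Franklin's involution on partitions into distinct parts. This involution pairs partitions with an even number of parts against those with an odd number. It moves either the smallest part or the ``slope'' (the maximal run of consecutive parts starting from the largest). The only unpaired partitions are the ones of pentagonal size $\frac{3k^2\pm k}{2}$, each of which contributes $(-1)^k$. Either route suffices. Since the paper cites this as a classical tool, in context one would simply reference it, and the proposal above is only for completeness.
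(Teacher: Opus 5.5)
Your proposal is correct. The paper gives no argument of its own here: its proof is only a citation of Hirschhorn's book, (1.6.1). Your closing remark that one would simply cite the result is therefore exactly what the paper does, and the rest is additional.

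Of your two self-contained routes, the triple-product specialization ($q\mapsto q^{3/2}$, $z=-q^{1/2}$, with the factor families $1-q^{3m}$, $1-q^{3m-1}$, $1-q^{3m-2}$ reassembling $f_1$) is the standard one. It also gives more than this one theorem. The specializations $z=1$; $q\mapsto q^{1/2}$, $z=q^{1/2}$; and $q\mapsto q^{3/2}$, $z=q^{1/2}$ of the same identity yield $\varphi(q)=f_2^5/(f_1^2f_4^2)$, $\psi(q)=f_2^2/f_1$ and $P(q)=f_2f_3^2/(f_1f_6)$. The paper also imports these from Hirschhorn, so one proof of the triple product would make most of its preliminaries self-contained. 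Franklin's involution is fully elementary and bijective, but it proves only this identity. The paper's bare citation buys brevity for a classical result.

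One small correction to your sketch for $c_0$. The evaluation that pairs with $z=i$ is $z=-1$ after $q\mapsto q^4$. At $z=-q$ your $F$ is identically zero, because of the factor $1+z^{-1}q$. Together, the evaluations at $z=i$ and at $z=-1$ (after $q\mapsto q^4$) give $c_0(q)\prod_{m\geq1}(1-q^{2m})=c_0(q^4)\prod_{m\geq1}(1-q^{8m})$. Iterating this, and using $c_0(q)=1+O(q^2)$, gives $c_0(q)=1/\prod_{m\geq1}(1-q^{2m})$, as required.
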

\begin{proof}
See Hirschhorn \cite[(1.6.1)]{Powerofq}.    
\end{proof}

Next, we will need to understand what happens in Theorem \ref{Euler_pentagonal_number_thm} when $q$ is replaced by $-q$.  The following lemma provides the answer.  
\begin{lemma}\label{Negative q}
    We have
    $$(-q;-q)_\infty = \frac{f_2^3}{f_1f_4}.$$
\end{lemma}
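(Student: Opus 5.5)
The plan is to prove the identity directly from the product definition. We split the factors of $(-q;-q)_\infty$ according to the parity of the index and rewrite each piece in terms of the $f_k$.

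First we write
$$(-q;-q)_\infty=\prod_{i\geq1}\bigl(1-(-q)^i\bigr)=\prod_{i \text{ even}}(1-q^i)\prod_{i\text{ odd}}(1+q^i).$$
The even-indexed factors give $\prod_{j\geq1}(1-q^{2j})=f_2$. The odd-indexed factors give $(-q;q^2)_\infty$.

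Next we express $(-q;q^2)_\infty$ as a quotient of the $f_k$. This takes two elementary identities. Since $(q;q^2)_\infty(q^2;q^2)_\infty=f_1$, we have $(q;q^2)_\infty=f_1/f_2$. Since $(1+q^i)(1-q^i)=1-q^{2i}$, we have
$$(-q;q)_\infty=\frac{(q^2;q^2)_\infty}{(q;q)_\infty}=\frac{f_2}{f_1}.$$
Applying the same trick with $q^2$ in place of $q$ gives $(-q^2;q^2)_\infty=f_4/f_2$. Because $(-q;q)_\infty=(-q;q^2)_\infty(-q^2;q^2)_\infty$, it follows that
$$(-q;q^2)_\infty=\frac{f_2}{f_1}\cdot\frac{f_2}{f_4}=\frac{f_2^2}{f_1f_4}.$$

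Multiplying the two pieces then gives $(-q;-q)_\infty=f_2\cdot\frac{f_2^2}{f_1f_4}=\frac{f_2^3}{f_1f_4}$, which is the claimed identity.

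There is no real obstacle here. The only care needed is the bookkeeping of signs, namely that $1-(-q)^i$ equals $1+q^i$ exactly when $i$ is odd. All rearrangements of these infinite products are justified by absolute convergence for $|q|<1$.
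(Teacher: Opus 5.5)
Your proof is correct and is essentially the paper's argument: both split $(-q;-q)_\infty$ by parity into $f_2\,(-q;q^2)_\infty$ and then evaluate $(-q;q^2)_\infty=f_2^2/(f_1f_4)$ by elementary product manipulations. The only difference is cosmetic: the paper writes $(-q;q^2)_\infty=(q^2;q^4)_\infty/(q;q^2)_\infty$, while you write it as $(-q;q)_\infty/(-q^2;q^2)_\infty$ (so the identity $(q;q^2)_\infty=f_1/f_2$ you state is never actually used in your route and can be dropped).
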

\begin{proof}
Using elementary generating function manipulations, we have
\begin{align*}
(-q;-q)_\infty 
&=
(q^2;q^2)_\infty(-q;q^2)_\infty \\
&=
f_2\cdot \frac{(q^2;q^4)_\infty}{(q;q^2)_\infty} \\
&=
f_2\cdot \frac{(q^2;q^2)_\infty}{(q^4;q^4)_\infty}\cdot \frac{(q^2;q^2)_\infty}{(q;q)_\infty}\\
&=
\frac{f_2^3}{f_1f_4}.
\end{align*}
\end{proof}
In the same vein, it will be beneficial in the work below to consider the generating function $\OL{B}_k(q)$ when $q$ is replaced by $-q$.  
\begin{corollary}\label{Negative Overcubic}
    We have
    $$\OL{B}_k(-q)=\sum_{n\geq0}\OL{b}_k(n)(-q)^n = \frac{f_1^{2k}f_4^{3k}}{f_2^{7k}}.$$
    \begin{proof}
        Beginning with the definition of $\OL{B}_k(q)$, we have
        \begin{align*}
            \OL{B}_k(-q)= \sum_{n\geq0}\OL{b}_k(n)(-q)^n &= \infprod \frac{(1-(-q)^{4i})^k}{(1-(-q)^i)^{2k}(1-(-q)^{2i})^k} \\
            &= \infprod\frac{(1-q^{4i})^k}{(1-q^{2i})^k}\infprod \frac{1}{(1-(-q)^i)^{2k}} \\
            &= \frac{f^k_4}{f^k_2} \left(\infprod (1-(-q)^i)\right)^{-2k}.
        \end{align*}
        Using Lemma \ref{Negative q}, we obtain
        %\begin{align*}
         $$   \sum_{n\geq0}\OL{b}_k(n)(-q)^n =   \frac{f^k_4}{f^k_2} \left(\frac{f_2^3}{f_1f_4}\right)^{-2k} 
            =   \frac{f^k_4}{f^k_2} \frac{f_1^{2k}f_4^{2k}}{f_2^{6k}}
            =   \frac{f_1^{2k}f_4^{3k}}{f_2^{7k}}.
        $$
        %\end{align*}
    \end{proof}
\end{corollary}

Next, we highlight the $q$-series $P(q)$ which is defined as 
    $$P(q) = \sum^\infty_{k=-\infty} q^{\frac{3k^2+k}{2}}.$$
 
\begin{theorem}\label{P(q) Gen Fn}
    We have
    $$P(q) = \frac{f_2f^2_3}{f_1f_6}.$$
\end{theorem}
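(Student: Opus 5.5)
The plan is to recognize $P(q)$ as a specialization of the Jacobi triple product and then rewrite the resulting infinite products in the $f_k$ notation. Recall the triple product in the form
$$\sum_{k=-\infty}^{\infty} z^k q^{k^2} = (q^2;q^2)_\infty(-zq;q^2)_\infty(-z^{-1}q;q^2)_\infty.$$
Replacing $q$ by $q^{3/2}$ and setting $z=q^{1/2}$ gives exponents $\frac{3k^2+k}{2}$ with all signs positive. This yields
$$P(q) = (q^3;q^3)_\infty(-q^2;q^3)_\infty(-q;q^3)_\infty.$$
This is the same specialization that produces Theorem \ref{Euler_pentagonal_number_thm}, except that $z=-q^{1/2}$ is replaced by $z=q^{1/2}$.

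Next I convert $(-q;q^3)_\infty(-q^2;q^3)_\infty$ into $f_k$'s. Since $(-q;q^3)_\infty(-q^2;q^3)_\infty(-q^3;q^3)_\infty=(-q;q)_\infty$, we get
$$(-q;q^3)_\infty(-q^2;q^3)_\infty = \frac{(-q;q)_\infty}{(-q^3;q^3)_\infty}.$$
Using $(-q;q)_\infty = f_2/f_1$ and $(-q^3;q^3)_\infty = f_6/f_3$, both from $1+x=(1-x^2)/(1-x)$, this becomes
$$\frac{f_2 f_3}{f_1 f_6}.$$
Multiplying by $(q^3;q^3)_\infty = f_3$ then gives $P(q) = \dfrac{f_2f_3^2}{f_1f_6}$, as claimed.

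The only step needing care is the substitution into the triple product. I need to check that the exponent of $q$ in the summand is $\frac{3}{2}k^2+\frac{1}{2}k$. I also need to check that the two factors $(-zq;q^2)_\infty$ and $(-z^{-1}q;q^2)_\infty$ become $(-q^2;q^3)_\infty$ and $(-q;q^3)_\infty$ respectively. Under the substitution these are $\prod_{n\geq 1}(1+q^{3n-1})$ and $\prod_{n\geq 1}(1+q^{3n-2})$, so the check is immediate. Everything else is routine manipulation of products. Alternatively, I could cite Hirschhorn's text for the triple product, in the same way the paper cites it for Theorem \ref{Euler_pentagonal_number_thm}.
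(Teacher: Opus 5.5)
Your proof is correct: the specialization $q\mapsto q^{3/2}$, $z=q^{1/2}$ of the triple product gives $P(q)=(q^3;q^3)_\infty(-q;q^3)_\infty(-q^2;q^3)_\infty$, and splitting $(-q;q)_\infty$ by residues mod $3$, together with $(-q;q)_\infty=f_2/f_1$ and $(-q^3;q^3)_\infty=f_6/f_3$, gives exactly $f_2f_3^2/(f_1f_6)$. The paper gives no argument of its own and only cites Hirschhorn's (14.3.3) and (26.1.2), so yours is a self-contained version of the standard triple-product derivation; if you want to avoid half-integer powers of $q$, use $\sum_k z^kq^{k(k+1)/2}=(q;q)_\infty(-zq;q)_\infty(-z^{-1};q)_\infty$ with $q\mapsto q^3$ and $z=q^{-1}$.
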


\begin{proof}
See Hirschhorn \cite[(14.3.3) and (26.1.2)]{Powerofq}. 
\end{proof}

\begin{corollary}\label{Negative P(q) Gen Fn}
    We have
    $$P(-q) = \frac{f_1f_4f_6^5}{f_2^2f_3^2f_{12}^2}.$$
\end{corollary}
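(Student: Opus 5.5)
We obtain the formula directly from Theorem \ref{P(q) Gen Fn} by replacing $q$ with $-q$. Since $P(q)=\frac{f_2f_3^2}{f_1f_6}$, we get
$$P(-q) = \frac{(q^2;q^2)_\infty\,(-q^3;-q^3)_\infty^2}{(-q;-q)_\infty\,(q^6;q^6)_\infty},$$
because $f_2$ and $f_6$ involve only even powers of $q$ and are therefore unchanged. So the whole task is to rewrite the two factors with alternating signs in terms of the $f_k$.

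For $(-q;-q)_\infty$ we use Lemma \ref{Negative q}, which gives $(-q;-q)_\infty = \frac{f_2^3}{f_1f_4}$. Replacing $q$ by $q^3$ in that lemma (note that $(-q^3;-q^3)_\infty$ is exactly $(-q;-q)_\infty$ evaluated at $q^3$, since $(-1)^{3i}=(-1)^i$) gives $(-q^3;-q^3)_\infty = \frac{f_6^3}{f_3f_{12}}$.

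Substituting both expressions, we have
$$P(-q) = f_2\cdot\frac{f_6^6}{f_3^2f_{12}^2}\cdot\frac{f_1f_4}{f_2^3}\cdot\frac{1}{f_6} = \frac{f_1f_4f_6^5}{f_2^2f_3^2f_{12}^2}.$$
This is the claimed identity.

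The only step requiring care is the substitution $q\mapsto q^3$ in Lemma \ref{Negative q}: one must check that the sign pattern behaves correctly, i.e., that $-q^3 = (-q)^3$. Once that is confirmed, the rest is bookkeeping of exponents.
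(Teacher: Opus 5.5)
Your proposal is correct and is the argument the paper has in mind: substitute $-q$ into $P(q)=\frac{f_2f_3^2}{f_1f_6}$, then apply Lemma~\ref{Negative q} at $q$ and at $q^3$ to rewrite $(-q;-q)_\infty$ and $(-q^3;-q^3)_\infty$. Your exponent bookkeeping giving $\frac{f_1f_4f_6^5}{f_2^2f_3^2f_{12}^2}$ checks out.
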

\begin{proof}
The proof follows from Lemma \ref{Negative q} and various $q$-series simplifications.    
\end{proof}

We will also utilize several results related to Ramanujan's theta functions $\varphi(q)$ and $\psi(q)$. 

% The mega theorem

\begin{theorem}
    We have 
    \begin{align}
        \varphi(q) &= \frac{f^5_2}{f_1^2f_4^2}, \label{Phi Gen Fn} \\
        \varphi(-q) &= \frac{f_1^2}{f_2}, \label{Negative Phi Gen Fn} \\
%        \varphi(q) &= \varphi(q^4) + 2q\psi(q^8), \label{2-Dissection Phi} \\
        \psi(q) &= \frac{f^2_2}{f_1},\label{Psi Gen Fn} \\
        \psi(-q) &= \frac{f_1f_4}{f_2}, \label{Negative Psi Gen Fn} \\
        \psi(q) &= P(q^3) + q\psi(q^9), \label{3-Dissection Psi} \\
        \frac{1}{\psi(q)} &= \frac{\psi(q^9)}{\psi(q^3)^4}\left(P(q^3)^2-qP(q^3)\psi(q^9)+q^2\psi(q^9)^2\right), \text{ and }  \label{3-Dissection Reciprocal Psi} \\
        \psi(q)f_4^3 &= f_5^3\varphi(-q^{10})-5q^{15}f_{100}^3\psi(q^{25}) + E_1 + E_2 + E_3 + E_4 \label{5-Dissection}
    \end{align}
    where $E_i$ is a $q$-series where the powers of $q$ are of the form $q^{5j+i}$.
\end{theorem}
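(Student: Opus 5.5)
These seven identities are classical, and the plan is to verify each by reducing to product forms or to standard dissections; most of the work is already in Hirschhorn \cite{Powerofq}. For \eqref{Phi Gen Fn} and \eqref{Psi Gen Fn}, apply the Jacobi triple product
$\sum_n z^n q^{n^2}=(q^2;q^2)_\infty(-zq;q^2)_\infty(-q/z;q^2)_\infty$.
Taking $z=1$ gives $\varphi(q)=f_2(-q;q^2)_\infty^2$. Writing $(-q;q^2)_\infty=f_2^2/(f_1f_4)$ (the same manipulation as in the proof of Lemma \ref{Negative q}) then gives $f_2^5/(f_1^2f_4^2)$. For $\psi$, replace $q$ by $q^{1/2}$ and take $z=q^{1/2}$; this gives $\psi(q)=(q^2;q^2)_\infty(-q;q)_\infty=f_2^2/f_1$.

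Identities \eqref{Negative Phi Gen Fn} and \eqref{Negative Psi Gen Fn} follow by substituting $q\mapsto -q$ into these products and using Lemma \ref{Negative q}, together with the observation that $f_2,f_4$ are invariant under $q\mapsto-q$. For example, $\varphi(-q)=f_2^5f_1^2f_4^2/(f_2^6f_4^2)=f_1^2/f_2$. The $\psi(-q)$ case is the same computation.

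For the $3$-dissection \eqref{3-Dissection Psi}, I would split $\psi(q)=\sum_{k\ge0}q^{k(k+1)/2}$ according to $k \bmod 3$. The terms with $k\equiv1\pmod 3$ have exponents $\equiv1\pmod 3$. Writing $k=3j+1$ gives exponents $9j(j+1)/2+1$, so these terms sum to $q\psi(q^9)$. The classes $k\equiv0,2$ together give exponents $\equiv0\pmod 3$. Symmetrizing them to a bilateral sum over $k=3j$, $j\in\Z$ (pairing $k=3j$ with $k=-3j-1$), gives $\sum_j q^{3(3j^2+j)/2}=P(q^3)$.

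For \eqref{3-Dissection Reciprocal Psi}, write $a=P(q^3)$ and $b=q\psi(q^9)$. Then $1/\psi(q)=1/(a+b)=(a^2-ab+b^2)/(a^3+b^3)$, so it remains to prove
$$a^3+b^3=\frac{\psi(q^3)^4}{\psi(q^9)}.$$
This is a cubic identity. Using Theorem \ref{P(q) Gen Fn} and \eqref{Psi Gen Fn}, everything becomes an eta-quotient identity in $q^3$. I expect this step to be the main obstacle. I would cite the known identity $P(q)^3+q\psi(q^3)^3=\psi(q)^4/\psi(q^3)$ from Hirschhorn (Ch.~14) rather than prove it. If a proof is needed, I would compare the two sides as modular forms on $\Gamma_0(6)$, or derive the identity from the cubic theta functions $a(q),b(q),c(q)$ and $a^3=b^3+c^3$.

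Finally, for \eqref{5-Dissection}, use the product $\psi(q)f_4^3$ and Jacobi's identity $f_4^3=\sum_{n\ge0}(-1)^n(2n+1)q^{2n(n+1)}$, and multiply this by $\psi(q)=\sum_m q^{m(m+1)/2}$. Only the $q^{5j}$-component is claimed explicitly. Completing squares, $8\bigl(2n(n+1)+m(m+1)/2\bigr)+5=4(2n+1)^2+(2m+1)^2$. Hence the exponent is $\equiv0\pmod5$ exactly when $4x^2+y^2\equiv0\pmod 5$, where $x=2n+1$ and $y=2m+1$. This forces either $y\equiv\pm x\pmod5$ (up to sign) or $5\mid x$ and $5\mid y$. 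The subcase $5\mid x,5\mid y$ produces $-5q^{15}f_{100}^3\psi(q^{25})$ directly, by rescaling $x=5x'$, $y=5y'$ and using the weight $(-1)^n(2n+1)$. The remaining classes should be reorganized via a change of variables $(x,y)\mapsto$ linear combinations, in the standard way, and then resummed with the triple product into $f_5^3\varphi(-q^{10})$. This is again a quoted Hirschhorn-type result, and I would cite it if the bookkeeping of the remaining classes becomes unwieldy. All other exponent classes are simply named $E_1,\dots,E_4$.
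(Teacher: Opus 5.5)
\textbf{Identities one through six.} Your arguments for \eqref{Phi Gen Fn}--\eqref{3-Dissection Reciprocal Psi} are correct. The paper proves essentially nothing here itself. It cites Hirschhorn for \eqref{Phi Gen Fn}, \eqref{Psi Gen Fn} and \eqref{3-Dissection Psi}, and Hirschhorn--Sellers for \eqref{3-Dissection Reciprocal Psi} and \eqref{5-Dissection}. It derives the $q\mapsto -q$ versions from Lemma~\ref{Negative q} exactly as you do. So your triple-product derivations, the splitting of $\psi$ by $k \bmod 3$, and the reduction of \eqref{3-Dissection Reciprocal Psi} to $P(q)^3+q\psi(q^3)^3=\psi(q)^4/\psi(q^3)$ via $1/(a+b)=(a^2-ab+b^2)/(a^3+b^3)$ are more self-contained than what the paper offers. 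One small point: with $z=q^{1/2}$ the bilateral sum is $2\psi(q)$ and $(-1;q)_\infty=2(-q;q)_\infty$, so the factors of $2$ cancel.

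\textbf{The sketch of \eqref{5-Dissection} contains a sign error.} In the subcase $5\mid x$, $5\mid y$ you have $n=5n'+2$ and $m=5m'+2$. Then $(-1)^n(2n+1)=5(-1)^{n'}(2n'+1)$, so this class contributes $+5q^{15}f_{100}^3\psi(q^{25})$, not $-5q^{15}f_{100}^3\psi(q^{25})$.

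You can check this at $q^{15}$. The coefficient in $\psi(q)f_4^3$ is $6$, coming from $(n,m)=(0,5)$ with weight $+1$ and $(n,m)=(2,2)$ with weight $+5$. By contrast, $f_5^3\varphi(-q^{10})$ has coefficient $11$ there. Hence the remaining classes sum to $f_5^3\varphi(-q^{10})-10q^{15}f_{100}^3\psi(q^{25})$. Your planned resummation of them into $f_5^3\varphi(-q^{10})$ cannot succeed.

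\textbf{How to repair it.} Use inclusion--exclusion. Let $S_\pm$ be the sums over $y\equiv \pm x \pmod 5$, each \emph{including} the points with $5\mid x$, $5\mid y$. Then the $q^{5j}$-part equals $S_++S_--5q^{15}f_{100}^3\psi(q^{25})$, and it remains to show $S_++S_-=f_5^3\varphi(-q^{10})$. This can be done in four steps.
\begin{enumerate}
\item Let $x,y$ range over all odd integers of either sign. The weight $(-1)^{(x-1)/2}x$ is even in $x$, and $y\mapsto -y$ swaps the two classes. So the full sum over the class $y\equiv x$ equals $2(S_++S_-)$.
\item Parametrize that class by $2x+iy=(2+i)(u+4bi)$, that is, $x=u-2b$ and $y=u+8b$ with $u$ odd and $b\in\Z$. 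Then $4x^2+y^2=5(u^2+16b^2)$.
\item The part of the weight proportional to $b$ cancels under $b\mapsto -b$.
\item What remains is twice $\sum_{a\ge0,\,b\in\Z}(-1)^{a+b}(2a+1)q^{5a(a+1)/2+10b^2}$, which is $2f_5^3\varphi(-q^{10})$.
\end{enumerate}
Alternatively, simply cite Hirschhorn--Sellers for \eqref{5-Dissection}, as the paper does.
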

\begin{proof}
    For Equations \eqref{Phi Gen Fn}, \eqref{Psi Gen Fn}, and \eqref{3-Dissection Psi}, see equations (1.5.6), (1.5.7), and (14.3.3) of Hirschhorn \cite{Powerofq}, respectively.
    Equations \eqref{Negative Phi Gen Fn} and \eqref{Negative Psi Gen Fn} are proven using Lemma \ref{Negative q}.
    Lastly, for Equation \eqref{3-Dissection Reciprocal Psi}, see Hirschhorn and Sellers \cite[Lemma 2.2]{OddPartsDistinct}, and for Equation \eqref{5-Dissection}, see Hirschhorn and Sellers \cite[Lemma 2.2]{4colorFrobenius}.
\end{proof}

Beyond the results mentioned above, we will require a few more $q$-series results.  
We begin with dissections related to the cubic partition function $b(n)$.  In order to state the results, we define $F(q) = f_1f_2$.

\begin{theorem}\label{Capital F Dissection}
    We have
    $$F(q) = F(q^9)\left(X(q^3)^{-1}-q-2q^2X(q^3)\right) \text{ where } 
    X(q) = \frac{f_1f_6^3}{f_2f_3^3}.$$
\end{theorem}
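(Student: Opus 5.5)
The plan is to reduce the statement to two classical 3-dissections: the one for Euler's product $f_1$, and the analogous one for $f_2$ obtained from it by $q\mapsto q^2$.

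For $f_1$, Theorem \ref{Euler_pentagonal_number_thm} splits the exponents $\frac{3k^2+k}{2}$ by $k \bmod 3$. This gives the standard identity
$$f_1 = f_9\left(\frac{1}{X_0(q^3)} - q - q^2 X_0(q^3)\right), \qquad X_0(q) = \frac{f_1 f_6^3}{f_2 f_3^3}.$$
Here $X_0$ is exactly the $X$ in the statement. I would derive it as follows. The terms with $k \equiv 0,1,2 \pmod 3$ give respectively a series in $q^{3j}$, $-q$ times a series in $q^{3j}$, and a third series in $q^{3j+2}$. By the Jacobi triple product, the middle term sums to $-qf_9$. Each of the other two is a quintuple-product-type series. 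They evaluate to $f_9/X(q^3)$ and $-q^2 f_9 X(q^3)$, namely $\frac{f_6f_9^2}{f_3f_{18}}$ and $-q^2\frac{f_3f_{18}^2}{f_6f_9}$. This is Hirschhorn (14.3.1 type). I would cite it from \cite{Powerofq} rather than rederive it.

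Replacing $q$ by $q^2$ gives
$$f_2 = f_{18}\left(\frac{1}{X(q^6)} - q^2 - q^4X(q^6)\right).$$
Writing $a = f_6f_9^2/(f_3f_{18})$ and $b = f_3f_{18}^2/(f_6f_9)$, we have $f_1 = a - qf_9 - q^2 b$. Similarly $f_2 = a' - q^2 f_{18} - q^4 b'$, where $a', b'$ are $a, b$ with $q\to q^2$. Note that $ab = f_9f_{18}$, so $a/b = 1/X(q^3)^{\,}$ times $f_9$-factors as appropriate.

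Next I would multiply out $F(q) = f_1f_2$ and collect terms by the residue of the exponent mod 3. Alternatively, the cleaner route is to use the known product form. Eliminating the $q^3$-series, the claim is equivalent to three identities, one per residue class:
$$F\text{-part}_0 = \frac{F(q^9)}{X(q^3)}, \qquad F\text{-part}_1 = -qF(q^9), \qquad F\text{-part}_2 = -2q^2F(q^9)X(q^3).$$
In terms of $a,b,a',b'$, the residue-1 part is $-q(f_9a' + \ldots)$ with cross terms. Showing that these collapse requires theta-function identities such as $a a' - q^3(\ldots) = \ldots$. This is exactly where the difficulty lies.

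I expect this collection step to be the main obstacle. Products of the two dissections mix $X(q^3)$ with $X(q^6)$, and reducing everything to expressions in $X(q^3)$ alone needs a nontrivial modular identity, essentially the cubic-continued-fraction relation. So I would instead follow the route of Chan for the cubic partition function: start from the known 3-dissection of $1/(f_1f_2)$, i.e. Chan's identity $\frac{1}{F(q)} = \frac{F(q^9)^2}{F(q^3)^4}\bigl(X^{-2} + qX^{-1} + 3q^2 + \ldots\bigr)$. Equivalently, I would use the cubic identity $X(q)^{-1} - 8q^3... $ type relations, namely $\frac{1}{X(q)^3}... - 8qX(q)^{3} ... = F(q)^4/(qF(q^3)^4)$ style. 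With that in hand, I would verify the claimed formula by checking that its product with Chan's dissection of $1/F(q)$ equals 1. If citation is allowed, the simplest proof is to quote this as Hirschhorn's dissection of $f_1f_2$ (e.g. \cite{Powerofq}, Chapter 14 or the cubic partition chapter). Failing that, I would verify the identity as a modular function identity by checking enough coefficients (Sturm bound).
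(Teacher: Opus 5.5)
\textbf{Overall.} Your closing fallback, citing Hirschhorn's dissection of $f_1f_2$, is exactly what the paper does. Its entire proof is the reference \cite[(14.3.1)]{Powerofq}, and (14.3.1) is the dissection of $F(q)=f_1f_2$ itself, not of $f_1$. That citation suffices. The arguments you sketch before it, however, contain a genuine error, and none of them is a proof on its own.

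\textbf{The starting dissection of $f_1$ is false.} You claim $f_1=f_9\bigl(X(q^3)^{-1}-q-q^2X(q^3)\bigr)$, with middle term $-qf_9$ ``by the Jacobi triple product''. Splitting Theorem \ref{Euler_pentagonal_number_thm} by $k \bmod 3$ actually gives
\[
f_1=\sum_j(-1)^jq^{\frac{27j^2+3j}{2}}-q\sum_j(-1)^jq^{\frac{27j^2-15j}{2}}-q^2\sum_j(-1)^jq^{\frac{27j^2+21j}{2}}.
\]
The triple product turns the middle sum into $(q^6;q^{27})_\infty(q^{21};q^{27})_\infty(q^{27};q^{27})_\infty=1-q^6-q^{21}+\cdots$, not $f_9$. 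Concretely, $f_1=1-q-q^2+q^5+q^7-\cdots$ has a $+q^7$ term, whereas your formula has no $q^7$ term at all. A $3$-dissection whose middle term is $-q$ times a single product in $q^9$ is special to the product $f_1f_2$, just as the middle term $-qf_{25}$ is special to the $5$-dissection of $f_1$. In other words, it is the theorem itself, not an ingredient you may assume for $f_1$ alone. Even with the correct modulus-$27$ dissections of $f_1$ and $f_2$, the collection step you flag would still need theta-function identities you never provide.

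\textbf{The Chan route is circular as stated.} Note first that the prefactor in Theorem \ref{cubic 3-dissection} is $F(q^9)^3/F(q^3)^4$, not $F(q^9)^2/F(q^3)^4$. Multiplying the claimed formula by that dissection gives, with $X=X(q^3)$,
\[
\frac{F(q^9)^4}{F(q^3)^4}\left(X^{-3}-7q^3-8q^6X^3\right).
\]
So you would need the identity $X(q)^{-3}-7q-8q^2X(q)^3=F(q)^4/F(q^3)^4$, which you only gesture at, with the $-7q$ term missing. More importantly, the standard way to obtain Theorem \ref{cubic 3-dissection} is to invert the present theorem using exactly that identity. The route is therefore circular unless you have independent proofs of both.

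\textbf{The Sturm-bound idea is sound but undeveloped.} The statement is equivalent to the weight-one eta-quotient identity
\[
f_1f_2=\frac{f_6f_9^4}{f_3f_{18}^2}-qf_9f_{18}-2q^2\frac{f_3f_{18}^4}{f_6f_9^2},
\]
so this approach is legitimate in principle. Without a level, character and bound, though, it is only a suggestion.
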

\begin{proof}
See Hirschhorn \cite[(14.3.1)]{Powerofq}.    
\end{proof}

\begin{theorem}\label{cubic 3-dissection}
We have
$$B(q) = \frac{1}{F(q)} = \frac{F(q^9)^3}{F(q^3)^4}\left(X(q^3)^{-2}+qX(q^3)^{-1} +3q^2-2q^3X(q^3)+4q^4X(q^3)^2\right).$$
\end{theorem}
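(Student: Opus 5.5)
We obtain the 3-dissection of $B(q)=1/F(q)$ from Theorem \ref{Capital F Dissection} by rationalizing the bracketed factor. Write $x = X(q^3)$ and set
$$A(q) = x^{-1} - q - 2q^2 x,$$
so that $F(q) = F(q^9)A(q)$. The idea is the standard one: multiply $A(q)$ by its two conjugates under $q \mapsto \omega q$ and $q\mapsto \omega^2 q$, where $\omega$ is a primitive cube root of unity. Since $x$ depends only on $q^3$, it is unchanged by these substitutions.

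Put $a = x^{-1}$, $b=-q$, $c=-2q^2x$. The product $A(q)A(\omega q)A(\omega^2 q)$ is the norm form $a^3+b^3+c^3-3abc$. This gives
$$A(q)A(\omega q)A(\omega^2 q) = x^{-3} - q^3 - 8q^6x^3 - 6q^3.$$
In the last term we used $-3abc = -3\cdot x^{-1}\cdot(-q)\cdot(-2q^2x) = -6q^3$. So the norm is $x^{-3} - 7q^3 - 8q^6x^3$, a series in $q^3$ alone.

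The key step, and the main obstacle, is to identify this norm as a pure product. We expect
$$x^{-3}-7q^3-8q^6x^3 = \frac{F(q^3)^4}{F(q^9)^4}\, x^{-3}\cdot(\text{something})$$
and must pin down the exact form. The cleanest route is the following.
\begin{itemize}
\item Apply $q\mapsto\omega q,\omega^2 q$ directly to $F(q)=F(q^9)A(q)$.
\item Use $F(q)F(\omega q)F(\omega^2 q) = F(q^3)^4/F(q^9)$ on the left. This follows from the elementary identity $f_1(q)f_1(\omega q)f_1(\omega^2 q) = f_3^4/f_9$, applied at $q$ and at $q^2$.
\end{itemize}
This immediately yields
$$A(q)A(\omega q)A(\omega^2 q) = \frac{F(q^3)^4}{F(q^9)^4},$$
which sidesteps verifying the norm identity by hand. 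As a by-product it gives $x^{-3}-7q^3-8q^6x^3 = F(q^3)^4/F(q^9)^4$.

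Then
$$\frac{1}{F(q)} = \frac{1}{F(q^9)A(q)} = \frac{A(\omega q)A(\omega^2 q)}{F(q^9)\cdot F(q^3)^4/F(q^9)^4} = \frac{F(q^9)^3}{F(q^3)^4}A(\omega q)A(\omega^2 q).$$
It remains to expand
$$A(\omega q)A(\omega^2 q) = (a+\omega b+\omega^2 c)(a+\omega^2 b+\omega c) = a^2+b^2+c^2-ab-bc-ca.$$
Substituting gives
\begin{align*}
a^2 &= x^{-2}, & b^2 &= q^2, & c^2 &= 4q^4x^2,\\
-ab &= qx^{-1}, & -bc &= -2q^3x, & -ca &= 2q^2.
\end{align*}
Summing, we obtain $x^{-2}+qx^{-1}+3q^2-2q^3x+4q^4x^2$, exactly the bracket in the statement.

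The only nontrivial input is thus the product identity $f_1(q)f_1(\omega q)f_1(\omega^2q)=f_3^4/f_9$. It follows by splitting $\prod_n (1-q^n)(1-\omega^nq^n)(1-\omega^{2n}q^n)$ according to $3\mid n$ and $3\nmid n$. For $3\nmid n$ the three factors combine to $1-q^{3n}$. For $3\mid n$ the factor is $(1-q^n)^3$.
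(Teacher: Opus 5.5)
Your proof is correct. The norm $(a+b+c)(a+\omega b+\omega^2 c)(a+\omega^2 b+\omega c)$ does equal $x^{-3}-7q^3-8q^6x^3$. The identity $F(q)F(\omega q)F(\omega^2 q)=F(q^3)^4/F(q^9)$ holds, since at $q^2$ the roots $\omega,\omega^2$ are merely swapped. And $a^2+b^2+c^2-ab-bc-ca$ expands to exactly the stated bracket.

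The paper gives no argument of its own here; it simply cites Hirschhorn's (14.4.5), just as it cites (14.3.1) for Theorem \ref{Capital F Dissection}. Your route makes the 3-dissection of $1/F(q)$ a purely formal consequence of Theorem \ref{Capital F Dissection} plus the elementary product $f_1(q)f_1(\omega q)f_1(\omega^2 q)=f_3^4/f_9$. Because you obtain the norm from the product of the three conjugates of $F$ rather than verifying it directly, you need no further theta-function identity. You also get $X(q)^{-3}-7q-8q^2X(q)^3=F(q)^4/F(q^3)^4$ as a by-product. This is the same conjugation device that produces the classical 5-dissection of $1/f_1$ from $f_1=f_{25}(R^{-1}-q-q^2R)$, with $R$ a function of $q^5$.

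The citation keeps the paper short; your version makes the result self-contained relative to the previously stated theorem. The tentative sentence about expecting the norm to be $\frac{F(q^3)^4}{F(q^9)^4}x^{-3}\cdot(\text{something})$ can simply be deleted, since the product argument determines the norm exactly.
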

\begin{proof}
See Hirschhorn \cite[(14.4.5)]{Powerofq}.    
\end{proof}

The number of partitions of $n$ where even parts are distinct is denoted as $ped(n)$ and the generating function appears in the work of Andrews, Hirschhorn, and Sellers \cite{EvenPartsDistinct}.

\begin{theorem}\label{even parts distinct partition}
    We have
    $$\sum_{n\geq0}ped(n)q^n = \frac{f_4}{f_1}.$$
\end{theorem}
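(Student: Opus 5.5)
The claimed identity $\sum_{n\geq0}ped(n)q^n = f_4/f_1$ is a direct product manipulation, and I would prove it by building the generating function combinatorially. A partition in which even parts are distinct is determined by two independent choices. First, for each odd integer $2i-1$ we choose a multiplicity in $\{0,1,2,\dots\}$. Second, for each even integer $2i$ we choose a multiplicity in $\{0,1\}$. The weight is additive over parts, so the generating function factors as
$$\sum_{n\geq0}ped(n)q^n = \prod_{i\geq1}\frac{1}{1-q^{2i-1}}\prod_{i\geq1}(1+q^{2i}) = \frac{(-q^2;q^2)_\infty}{(q;q^2)_\infty}.$$

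The remaining step is to rewrite these two products in terms of the $f_k$. This is the same bookkeeping already used in the proof of Lemma \ref{Negative q}. Since $(1+q^{2i})(1-q^{2i}) = 1-q^{4i}$, we get $(-q^2;q^2)_\infty = f_4/f_2$. Splitting $(q;q)_\infty$ into odd-index and even-index factors gives $(q;q^2)_\infty = f_1/f_2$. Combining these,
$$\frac{(-q^2;q^2)_\infty}{(q;q^2)_\infty} = \frac{f_4}{f_2}\cdot\frac{f_2}{f_1} = \frac{f_4}{f_1},$$
which is the stated identity.

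There is no genuine obstacle here. The only point needing care is that the infinite products may be rearranged and cancelled. This is justified because every product involved converges absolutely for $|q|<1$. The same conclusion also holds at the level of formal power series, where only finitely many factors affect each coefficient.

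If one prefers to avoid the combinatorial step, an alternative is to start from $1/f_1 = \prod_i(1-q^i)^{-1}$ and multiply by $f_4/f_2 = \prod_i(1+q^{2i})$. The factor $(1+q^{2i})$ then replaces the geometric series $(1-q^{2i})^{-1}$ for each even part. This is the reverse of the same argument, read as removing repeated even parts.
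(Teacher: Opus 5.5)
Your main argument is correct and is the standard one. The paper gives no proof of its own here; it simply cites Andrews, Hirschhorn, and Sellers \cite{EvenPartsDistinct}. Your factorization $\sum_{n\geq0}ped(n)q^n=(-q^2;q^2)_\infty/(q;q^2)_\infty=(f_4/f_2)(f_2/f_1)$ is the usual derivation behind that citation.

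There is one slip in your optional last paragraph. Multiplying $1/f_1$ by $f_4/f_2$ gives $f_4/(f_1f_2)$, not $f_4/f_1$. To replace each $(1-q^{2i})^{-1}$ by $(1+q^{2i})$ you must multiply by $f_2\cdot(f_4/f_2)=f_4$.
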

Furthermore, Andrews, Hirschhorn, and Sellers provide the following 3-dissection.  

\begin{theorem}\label{ped dissection}
    We have
    $$\sum_{n\geq0}ped(n)q^n = \frac{f_{12}f_{18}^4}{f_3^3f_{36}^2} + q\frac{\varphi(-q^9)\psi(-q^9)}{\varphi(-q^3)^2}+2q^2\frac{f_6f_{18}f_{36}}{f_3^3}.$$
\end{theorem}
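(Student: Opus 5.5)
We start from the factorization
$$\frac{f_4}{f_1}=\frac{f_2^2}{f_1}\cdot\frac{f_4}{f_2^2}=\frac{\psi(q)}{\varphi(-q^2)},$$
which follows from \eqref{Psi Gen Fn} and \eqref{Negative Phi Gen Fn} (with $q\to q^2$). We would then 3-dissect the numerator and the reciprocal of the denominator separately, multiply, and collect terms by residue class of the exponent modulo $3$. For the numerator we use \eqref{3-Dissection Psi}:
$$\psi(q)=P(q^3)+q\psi(q^9),$$
where $P(q^3)$ is rewritten by Theorem \ref{P(q) Gen Fn} as $f_6f_9^2/(f_3f_{18})$.

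For the denominator we need the classical 3-dissection (Hirschhorn, Ch.~14)
$$\varphi(-q)=\varphi(-q^9)-2q\,\frac{f_3f_{18}^2}{f_6f_9}.$$
Put $a=\varphi(-q^9)$ and $b=2q\,f_3f_{18}^2/(f_6f_9)$. The plan is to write $1/(a-b)=(a^2+ab+b^2)/(a^3-b^3)$. We then need the companion identity
$$a^3-b^3=\frac{\varphi(-q^3)^4}{\varphi(-q^9)},$$
which is equivalent to an eta-quotient identity in $q^3$ and can be quoted from Hirschhorn. It yields
$$\frac{1}{\varphi(-q)}=\frac{\varphi(-q^9)}{\varphi(-q^3)^4}\left(a^2+ab+b^2\right),$$
exactly parallel to \eqref{3-Dissection Reciprocal Psi}. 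We then replace $q$ by $q^2$. Since $q^2$ permutes the residues $1$ and $2$ modulo $3$, the $ab$ term now carries $q^2$ and the $b^2$ term carries $q^4$, i.e. residue $1$.

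Multiplying the two dissections gives six products, which group as follows:
\begin{itemize}
\item residue $0$: $P(q^3)\,a^2$;
\item residue $1$: $q\psi(q^9)a^2$ and $P(q^3)b^2$;
\item residue $2$: $P(q^3)ab$ and $q\psi(q^9)\cdot ab$-type terms.
\end{itemize}
More precisely, the residue-$2$ terms are $q\psi(q^9)b^2$ at $q^5$, which is residue $2$, and $P(q^3)\,ab$ at $q^2$. We convert every factor to $f$'s using \eqref{Phi Gen Fn}--\eqref{Negative Psi Gen Fn} and simplify.
\begin{itemize}
\item The residue-$0$ term should collapse directly to $f_{12}f_{18}^4/(f_3^3f_{36}^2)$.
\item In residue $2$, the two terms should combine, via a further standard identity, into $2q^2f_6f_{18}f_{36}/f_3^3$.
\item In residue $1$, the two terms should combine into $q\,\varphi(-q^9)\psi(-q^9)/\varphi(-q^3)^2$.
\end{itemize}

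The main obstacle is these two combination steps, since each requires a two-term theta identity, analogous to the one for $a^3-b^3$, rather than mere simplification. If these become unwieldy, the fallback is to use $f_4/f_1=F(q^2)B(q)$ with Theorems \ref{Capital F Dissection} and \ref{cubic 3-dissection}, or simply to cite Andrews, Hirschhorn, and Sellers \cite{EvenPartsDistinct}, where this dissection is proved.
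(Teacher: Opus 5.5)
\textbf{There is a genuine gap: the residue bookkeeping drops a term, and the three identities that make up the actual proof are never stated.}

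Your factorization $f_4/f_1=\psi(q)/\varphi(-q^2)$ is correct. So are the 3-dissection $\varphi(-q)=\varphi(-q^9)-2q\,f_3f_{18}^2/(f_6f_9)$ and the identity $a^3-b^3=\varphi(-q^3)^4/\varphi(-q^9)$. The grouping of the six products, however, is wrong: you only place five of them. The product $q\psi(q^9)\cdot ab$ (with $a,b$ at $q^2$, so $ab$ carries $q^2$) sits at $q^3$ and belongs to residue $0$. Hence the residue-$0$ component is also a two-term sum and cannot ``collapse directly.'' The lone term $P(q^3)\varphi(-q^{18})^3/\varphi(-q^6)^4$ expands as $1+q^3+9q^6+\cdots$, whereas $\sum ped(3n)q^{3n}=1+3q^3+9q^6+\cdots$ (e.g.\ $ped(3)=3$). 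The dropped term supplies the missing $2q^3$.

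Once the terms are grouped correctly, every residue class is a sum of two eta quotients that must be shown to equal a single one. These three identities are the whole content of the theorem. Saying they ``should combine via a further standard identity'' leaves the proof undone. They can be supplied as follows. After cancelling common factors and replacing $q^3$ by $q$, residues $0$ and $2$ both reduce to
\[
\frac{f_3^3}{f_1}+2q\frac{f_{12}^3}{f_4}=\frac{f_2^7f_3f_{12}}{f_1^3f_4^3f_6}.
\]
This follows from the standard 2-dissections
\[
\frac{f_3^3}{f_1}=\frac{f_4^3f_6^2}{f_2^2f_{12}}+q\frac{f_{12}^3}{f_4}
\qquad\text{and}\qquad
\frac{f_3}{f_1^3}=\frac{f_4^6f_6^3}{f_2^9f_{12}^2}+3q\frac{f_4^2f_6f_{12}^2}{f_2^7},
\]
since both sides then become $\frac{f_4^3f_6^2}{f_2^2f_{12}}+3q\frac{f_{12}^3}{f_4}$. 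For residue $1$, after the same two 2-dissections the odd parts agree identically, and the even parts reduce to
\[
\frac{f_2^2f_3^9}{f_1^3f_6^3}+q\frac{f_6^6}{f_2}=\frac{f_2^7f_3^4}{f_1^4f_6^2}.
\]
This is the cubic continued fraction identity $1+G(q)^{-3}=\psi^4(q)/(q\,\psi^4(q^3))$ with $G(q)=q^{1/3}f_1f_6^3/(f_2f_3^3)$. Without these steps, only your fallback is an actual proof. That fallback coincides with what the paper does: it gives no derivation and simply cites Theorem 3.1 of Andrews, Hirschhorn, and Sellers.
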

\begin{proof}
See \cite[Theorem 3.1]{EvenPartsDistinct}.    
\end{proof}

One additional 2-dissection result is necessary below.  
\begin{theorem}\label{reciprocal of f_1^4 2-dissection}
    We have $$\frac{1}{f_1^4} = \frac{f^{14}_4}{f^{14}_2f^4_8} +4q\frac{f_4^2f_8^4}{f_2^{10}}.$$
\end{theorem}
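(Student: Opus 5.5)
We prove the 2-dissection of $1/f_1^4$ by squaring the classical 2-dissection of $1/f_1^2$, which we derive from the theta function $\varphi$. Start from \eqref{Negative Phi Gen Fn}, $\varphi(-q) = f_1^2/f_2$, so that
$$\frac{1}{f_1^2} = \frac{1}{f_2\,\varphi(-q)}.$$
Next, write $\frac{1}{\varphi(-q)} = \frac{\varphi(q)}{\varphi(-q)\varphi(q)}$. By \eqref{Phi Gen Fn} and \eqref{Negative Phi Gen Fn},
$$\varphi(q)\varphi(-q) = \frac{f_2^5}{f_1^2f_4^2}\cdot\frac{f_1^2}{f_2} = \frac{f_2^4}{f_4^2} = \varphi(-q^2)^2.$$
The denominator is therefore a function of $q^2$ alone, and the entire 2-dissection comes from the numerator $\varphi(q)$.

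For the numerator, use the standard 2-dissection $\varphi(q) = \varphi(q^4) + 2q\psi(q^8)$. It follows immediately from the series definitions \eqref{phi_sum_defn} and \eqref{psi_sum_defn} by splitting $n$ into even and odd values: odd squares are $(2k+1)^2 = 8\cdot\frac{k^2+k}{2}+1$, and the terms with $n$ and $-n$ pair up. Converting to products with \eqref{Phi Gen Fn} and \eqref{Psi Gen Fn} gives
$$\varphi(q^4) = \frac{f_8^5}{f_4^2f_{16}^2}, \qquad \psi(q^8) = \frac{f_{16}^2}{f_8}.$$
Combining everything,
$$\frac{1}{f_1^2} = \frac{f_4^4}{f_2^5}\bigl(\varphi(q^4)+2q\psi(q^8)\bigr) = \frac{f_8^5}{f_2^5f_{16}^2} + 2q\frac{f_4^2f_{16}^2}{f_2^5f_8}.$$

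Squaring this expression gives three terms:
$$\frac{1}{f_1^4} = \frac{f_8^{10}}{f_2^{10}f_{16}^4} + 4q\frac{f_4^2f_8^4}{f_2^{10}} + 4q^2\frac{f_4^4f_{16}^4}{f_2^{10}f_8^2}.$$
The middle cross term is already the desired odd part. The target even part is $\frac{f_4^{14}}{f_2^{14}f_8^4}$, so we must show
$$\frac{f_8^{10}}{f_{16}^4} + 4q^2\frac{f_4^4f_{16}^4}{f_8^2} = \frac{f_4^{14}}{f_2^4f_8^4}.$$
Replacing $q^2$ by $q$, this is
$$\frac{f_4^{10}}{f_8^4}+4q\frac{f_2^4f_8^4}{f_4^2} = \frac{f_2^{14}}{f_1^4f_4^4}.$$
This is exactly $\varphi(q^2)^2 + 4q\psi(q^4)^2 = \varphi(q)^2$ multiplied through by $f_2^4/f_4^{2}$ after converting to products: we have $\varphi(q)^2 = \frac{f_2^{10}}{f_1^4f_4^4}$, $\varphi(q^2)^2 = \frac{f_4^{10}}{f_2^4f_8^4}$, and $\psi(q^4)^2 = \frac{f_8^4}{f_4^2}$.

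It therefore remains to prove $\varphi(q)^2 = \varphi(q^2)^2 + 4q\psi(q^4)^2$, and this is the main obstacle. First try: square the dissection $\varphi(q) = \varphi(q^4)+2q\psi(q^8)$ to get
$$\varphi(q)^2 = \varphi(q^4)^2 + 4q^2\psi(q^8)^2 + 4q\varphi(q^4)\psi(q^8).$$
This reduces the task to two standard identities:
$$\varphi(q^4)^2 + 4q^2\psi(q^8)^2 = \varphi(q^2)^2 \qquad\text{and}\qquad \varphi(q)\psi(q^2) = \psi(q)^2,$$
the latter applied with $q \to q^4$. The first is the same identity at $q \to q^2$, so it should be handled by the product form, not by recursion. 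Both are classical, e.g.\ in Hirschhorn \cite{Powerofq}, and can also be checked directly as eta-quotient identities: using \eqref{Phi Gen Fn} and \eqref{Psi Gen Fn}, $\varphi(q)\psi(q^2) = \frac{f_2^5}{f_1^2f_4^2}\cdot\frac{f_4^2}{f_2} = \frac{f_2^4}{f_1^2} = \psi(q)^2$. For the identity $\varphi(q)^2=\varphi(q^2)^2+4q\psi(q^4)^2$ itself, I would cite the sum-of-two-squares interpretation: splitting $x^2+y^2$ by the parity of $x$ and $y$ gives it directly, because $(x,y)$ both even yields $\varphi(q^4)^2$-type terms, both odd yields $q^2\psi(q^8)^2$-type terms, and mixed parity yields the odd part. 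Alternatively, one can simply cite it from \cite{Powerofq}.
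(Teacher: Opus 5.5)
Your derivation is correct in substance, and it takes a genuinely different route from the paper, which gives no argument and simply cites Lemma 2.2 of Brietzke, da Silva, and Sellers. You instead build the dissection from the product forms of $\varphi$ and $\psi$ in Section~2, which fits the paper's $\varphi$-perspective well.

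There are two slips to fix:
\begin{itemize}
\item Since $\varphi(q)\varphi(-q)=f_2^4/f_4^2$, you get $\frac{1}{f_1^2}=\frac{f_4^2}{f_2^5}\varphi(q)$, not $\frac{f_4^4}{f_2^5}\varphi(q)$. Your two-term result is the correct one for $f_4^2$.
\item Your product identity is $\varphi(q)^2=\varphi(q^2)^2+4q\psi(q^4)^2$ multiplied by $f_2^4$, not by $f_2^4/f_4^2$.
\end{itemize}

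The detour through $1/f_1^2$ can be skipped. The same trick gives $\frac{1}{f_1^4}=\frac{\varphi(q)^2}{f_2^2\varphi(-q^2)^4}=\frac{f_4^4}{f_2^{10}}\varphi(q)^2$. So the theorem is exactly $\frac{f_4^4}{f_2^{10}}$ times the 2-dissection of $\varphi(q)^2$, and both terms come out at once.

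Your parity argument does not give that identity ``directly.'' Splitting $x^2+y^2$ by parity only reproduces your squared dissection $\varphi(q^4)^2+4q^2\psi(q^8)^2+4q\varphi(q^4)\psi(q^8)$. Matching the even part with $\varphi(q^2)^2$ still needs $r_2(2n)=r_2(n)$, for example via the bijection $(x,y)\mapsto(x+y,x-y)$.

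However, the recursion you abandoned already finishes the job. Set $D(q)=\varphi(q)^2-\varphi(q^2)^2-4q\psi(q^4)^2$. Your squaring computation, together with $\varphi(q^4)\psi(q^8)=\psi(q^4)^2$, says precisely that $D(q)=-D(q^2)$. Since $D(0)=0$, comparing the lowest-order nonzero coefficient on both sides forces $D=0$.

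With that fix, or the citation to Hirschhorn, your proof is self-contained where the paper's is not, at the cost of a few lines.
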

\begin{proof}
See Brietzke, da Silva, and Sellers \cite[Lemma 2.2]{Brietzke}.

\end{proof}

Our last $q$-series identity dates back to Ramanujan.   
\begin{theorem}\label{funPentagonalNumberResults}
    We have
    $$\frac{f_1^5}{f_2^2} = \sum_{n=-\infty}^\infty(6n+1)q^\frac{3n^2+n}{2}.$$
\end{theorem}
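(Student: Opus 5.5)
The plan is to derive the identity from the quintuple product identity by letting the free variable tend to $1$. I would use the quintuple product identity in the form
$$\sum_{n=-\infty}^{\infty} q^{\frac{3n^2+n}{2}}\left(z^{3n}-z^{-3n-1}\right) = (q;q)_\infty (z;q)_\infty (q/z;q)_\infty (qz^2;q^2)_\infty (q/z^2;q^2)_\infty,$$
citing Hirschhorn \cite{Powerofq}, where it appears in essentially this form. The whole proof is then a computation of one limit on each side.

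First, divide both sides by $1-z$ and let $z\to 1$. On the product side, the factor $(z;q)_\infty/(1-z)$ equals $(zq;q)_\infty$, which tends to $f_1$. Likewise $(q/z;q)_\infty \to f_1$, and the two remaining factors each tend to $(q;q^2)_\infty$. So the product side tends to
$$f_1^3 (q;q^2)_\infty^2 = f_1^3\left(\frac{f_1}{f_2}\right)^2 = \frac{f_1^5}{f_2^2},$$
using $(q;q^2)_\infty = f_1/f_2$.

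On the series side, each summand $(z^{3n}-z^{-3n-1})/(1-z)$ is a Laurent polynomial in $z$. By L'H\^opital's rule (or by writing it as a finite geometric sum), its value at $z=1$ is $-(3n+3n+1) = -(6n+1)$. The sign is uniform in $n$, so I would check it against $n=0$, where the term is $(1-z^{-1})/(1-z) = -z^{-1}\to -1$. This gives the series side as $-\sum_n (6n+1)q^{\frac{3n^2+n}{2}}$.

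The main obstacle is the overall sign, which depends on the normalization of the quintuple product. The limit computation as written produces a minus sign, which conflicts with the constant term $1$ of $f_1^5/f_2^2$. I would resolve this by using the equivalent normalization of the identity with $z^{-3n-1}-z^{3n}$ on the left, or equivalently by dividing by $z-1$ instead of $1-z$. I would fix the correct choice by comparing constant terms: both sides must equal $1$ at $q^0$.

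Interchanging the limit with the infinite sum is justified because, for $|q|<1$ and $z$ in a small neighborhood of $1$, the series converges uniformly and the difference quotients are bounded by $(6|n|+1)\,C^{|n|}$ for some constant $C$ close to $1$. A final sanity check is the coefficient of $q$: from the series it is $6(-1)+1=-5$ (the term $n=-1$), and from $f_1^5/f_2^2=(1-q)^5\cdots$ it is also $-5$.
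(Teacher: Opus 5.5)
Your route is genuinely different from the paper's, which gives no argument at all and simply cites Hirschhorn \cite[(10.7.3)]{Powerofq}. Deriving the identity from the quintuple product identity, by dividing out the vanishing factor and letting $z\to 1$, is a legitimate self-contained alternative. The surrounding steps are sound:
the product-side limit $f_1^3(q;q^2)_\infty^2=f_1^5/f_2^2$;
the evaluation of each Laurent polynomial at $z=1$;
the interchange of limit and sum, which you could avoid entirely by working coefficientwise in $q$, since only finitely many $n$ contribute to each power of $q$;
the check of the $q^1$ coefficient.
Your route explains where the weights $6n+1$ come from, whereas the citation buys only brevity.

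The one real problem is the starting identity, and your proposed repair does not fix it. As you wrote it, the quintuple product identity is false. At $q=0$ the left side is $1-z^{-1}$ (only $n=0$ contributes) while the right side is $1-z$. In fact your product is exactly $-z$ times the correct one, and that is the source of the sign conflict you ran into.

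Neither of your two fixes repairs this:
\begin{itemize}
\item ``Dividing by $z-1$ instead of $1-z$'' changes nothing, because both sides are divided by the same quantity. The product side then tends to $-f_1^5/f_2^2$, while the series side becomes $+\sum(6n+1)q^{\frac{3n^2+n}{2}}$, so the conflict remains.
\item The variant with $z^{-3n-1}-z^{3n}$ on the left is still not an identity with your product; it is off by a factor $z^{-1}$, even though that particular error disappears in the limit.
\end{itemize}
Choosing the sign by matching constant terms is legitimate only if you already know the identity up to a factor tending to $\pm 1$. It is cleaner to start from a correctly normalized statement, for example
\[
\sum_{n=-\infty}^{\infty} q^{\frac{3n^2+n}{2}}\left(z^{3n}-z^{-3n-1}\right)=(q;q)_\infty\,(zq;q)_\infty\,(z^{-1};q)_\infty\,(qz^2;q^2)_\infty\,(qz^{-2};q^2)_\infty .
\]
Equivalently, keep your product and use the summand $q^{\frac{3n^2+n}{2}}\left(z^{-3n}-z^{3n+1}\right)$.

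Now divide the displayed form by $1-z^{-1}$. The factor $(z^{-1};q)_\infty/(1-z^{-1})=(qz^{-1};q)_\infty$ tends to $f_1$, so the product side again tends to $f_1^5/f_2^2$. Each summand becomes $\left(z^{3n+1}-z^{-3n}\right)/(z-1)\to (3n+1)+3n=6n+1$. The correct sign comes out automatically, and with this correction your proof is complete.
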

\begin{proof}
See Hirschhorn \cite[(10.7.3)]{Powerofq}. 
\end{proof}
Finally, in order to quickly manipulate our generating functions to prove the congruences in question, we will rely heavily on the following lemma which follows, in essence, from divisibility properties of binomial coefficients.  
\begin{lemma}
\label{Expanded Freshman's Gn Fn}
    For prime $p$ and $\alpha,l\geq0$, we have
    $$f_{pl}^{p^{\alpha}} \equiv f_l^{p^{\alpha+1}} \pmod{p^{\alpha+1}}.$$
\end{lemma}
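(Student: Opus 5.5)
Since $f_{pl}^{p^\alpha}$ and $f_l^{p^{\alpha+1}}$ are formal power series with integer coefficients, it suffices to prove $(1-x^p)^{p^\alpha} \equiv (1-x)^{p^{\alpha+1}} \pmod{p^{\alpha+1}}$ in $\Z[x]$ and then substitute. Replacing $x$ by $q^{li}$ gives $(1-q^{pli})^{p^\alpha}\equiv(1-q^{li})^{p^{\alpha+1}} \pmod{p^{\alpha+1}}$ for each $i\geq1$. Multiplying these congruences over $i\geq 1$ finishes the proof. This is legitimate for infinite products because the coefficient of each fixed power of $q$ involves only finitely many factors. Also, congruences modulo $p^{\alpha+1}$ between power series with constant term $1$ are preserved under products, so a finite-truncation argument suffices.

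For the polynomial congruence, the core is the classical fact that if $A\equiv B \pmod{p^{j}}$ in $\Z[x]$ with $j\geq1$, then $A^p\equiv B^p \pmod{p^{j+1}}$. To see this, write $A=B+p^jC$ and expand:
$$A^p = B^p + \binom{p}{1}B^{p-1}p^jC + \sum_{r\geq2}\binom{p}{r}B^{p-r}p^{jr}C^r.$$
The second term is divisible by $p^{j+1}$. For $2\le r\le p-1$, each later term has a factor $p\cdot p^{jr}$. The term $r=p$ is divisible by $p^{jp}$, and $jp\ge j+1$ since $j\geq1$ and $p\ge2$.

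Now induct on $\alpha$. The base case $\alpha=0$ is the Frobenius (freshman's dream) congruence $(1-x)^p\equiv 1-x^p \pmod p$. It holds because $\binom{p}{r}\equiv0\pmod p$ for $0<r<p$, and $(-1)^p\equiv -1 \pmod p$, which also covers $p=2$. Assuming $(1-x^p)^{p^\alpha}\equiv(1-x)^{p^{\alpha+1}}\pmod{p^{\alpha+1}}$, raise both sides to the $p$-th power and apply the fact above with $j=\alpha+1$. This gives $(1-x^p)^{p^{\alpha+1}}\equiv(1-x)^{p^{\alpha+2}}\pmod{p^{\alpha+2}}$, which is the statement for $\alpha+1$.

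The argument is elementary, and the only point needing care is the lifting step, specifically the $r=p$ term when $p=2$. There one needs $2j\ge j+1$, which holds for $j\ge1$. The passage from the polynomial congruence to the infinite product is routine.
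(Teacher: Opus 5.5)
Your proof is correct and takes essentially the paper's approach. The paper gives no written proof beyond remarking that the lemma follows from divisibility properties of binomial coefficients, and your argument makes that explicit: the Frobenius base case $(1-x)^p\equiv 1-x^p \pmod{p}$, the lifting step $A\equiv B \pmod{p^j}\Rightarrow A^p\equiv B^p \pmod{p^{j+1}}$, and the coefficientwise passage to the infinite product.

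Your tacit restriction to $l\geq 1$ in the product step is harmless. The paper only defines $f_k$ for $k\geq 1$, so the stated range $l\geq 0$ is degenerate at $l=0$.
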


\section{A Different Perspective on Congruences Modulo Powers of 2}
\label{sec:mod2Perspective}
We begin this section by noting the following functional equation for $\OL{B}_k(q)$ which serves as a generalization of Theorem \ref{b_1 phi recurrence}.  
\begin{theorem}\label{b_k phi Recurrence}
    For all $k \geq 1$, $\OL{B}_k(q) = \varphi^k(q)\varphi^k(q^2)\left(\OL{B}_k(q^2)\right)^2$.
\end{theorem}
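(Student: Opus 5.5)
The plan is to observe that $\OL{B}_k(q) = \OL{B}(q)^k$ and raise Theorem \ref{b_1 phi recurrence} to the $k$-th power. Comparing \eqref{overcubic_gen_fn} with \eqref{overcubic-k-tuple_gen_fn} gives
$$\OL{B}_k(q)=\frac{f_4^k}{f_1^{2k}f_2^k}=\left(\frac{f_4}{f_1^2f_2}\right)^k=\OL{B}(q)^k .$$
Then, by Theorem \ref{b_1 phi recurrence},
$$\OL{B}_k(q)=\left(\varphi(q)\varphi(q^2)\OL{B}(q^2)^2\right)^k=\varphi^k(q)\varphi^k(q^2)\left(\OL{B}(q^2)^k\right)^2=\varphi^k(q)\varphi^k(q^2)\left(\OL{B}_k(q^2)\right)^2 .$$
The last equality applies the identity $\OL{B}(q^2)^k=\OL{B}_k(q^2)$, which is just the first display with $q$ replaced by $q^2$.

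I do not expect a real obstacle, because the $k=1$ case is already available. For a self-contained argument I would instead verify the $k=1$ identity directly in terms of the $f_j$.

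\begin{itemize}
\item By \eqref{Phi Gen Fn}, $\varphi(q)\varphi(q^2)=\dfrac{f_2^5}{f_1^2f_4^2}\cdot\dfrac{f_4^5}{f_2^2f_8^2}=\dfrac{f_2^3f_4^3}{f_1^2f_8^2}$.
\item By \eqref{overcubic_gen_fn} with $q\mapsto q^2$, $\OL{B}(q^2)^2=\dfrac{f_8^2}{f_2^4f_4^2}$.
\item The product of these two is $\dfrac{f_4}{f_1^2f_2}=\OL{B}(q)$.
\end{itemize}

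Raising everything to the $k$-th power then gives the theorem. The only bookkeeping to watch is the exponents of $f_2$ and $f_4$ in this product.
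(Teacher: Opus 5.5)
Your proposal is correct and matches the paper's route: the paper likewise notes that $\OL{B}_k(q)=\OL{B}(q)^k$ and raises Theorem \ref{b_1 phi recurrence} to the $k$-th power. Your optional direct check of the $k=1$ case via \eqref{Phi Gen Fn} and \eqref{overcubic_gen_fn} is also correct. The $f_2$ and $f_4$ exponents do combine to give $\frac{f_4}{f_1^2f_2}$.
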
  
\begin{corollary}\label{B_k Prod Defn}
    For all $k\geq 1$, $\OL{B}_k(q) = \varphi^k(q)\infprod \left(\varphi(q^{2^i})\right)^{3k\cdot2^{i-1}}$.
\end{corollary}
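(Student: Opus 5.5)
Since $\OL{B}_k(q) = \OL{B}(q)^k$ by \eqref{overcubic-k-tuple_gen_fn}, the quickest route is to raise Corollary \ref{b_1 phi defn} to the $k$-th power. I will instead iterate Theorem \ref{b_k phi Recurrence} directly, because the same telescoping argument also makes the convergence of the infinite product transparent.

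First, Theorem \ref{b_k phi Recurrence} with $q$ replaced by $q^{2^{j}}$ gives
$$\OL{B}_k(q^{2^j}) = \varphi^k(q^{2^j})\,\varphi^k(q^{2^{j+1}})\,\OL{B}_k(q^{2^{j+1}})^2.$$
Substituting this repeatedly, I will prove by induction on $N\geq 1$ that
$$\OL{B}_k(q) = \varphi^k(q)\left(\prod_{i=1}^{N-1}\varphi(q^{2^i})^{3k\cdot 2^{i-1}}\right)\varphi(q^{2^N})^{k\cdot 2^{N-1}}\,\OL{B}_k(q^{2^N})^{2^N}.$$
The case $N=1$ is exactly Theorem \ref{b_k phi Recurrence}. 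For the inductive step, I replace $\OL{B}_k(q^{2^N})$ by the recurrence and raise to the power $2^N$. This produces the factor $\varphi^{k2^N}(q^{2^N})\varphi^{k2^N}(q^{2^{N+1}})\OL{B}_k(q^{2^{N+1}})^{2^{N+1}}$. The exponent of $\varphi(q^{2^N})$ then becomes $k2^{N-1}+k2^N = 3k\cdot 2^{N-1}$, which matches the product, and the new tail factor is $\varphi(q^{2^{N+1}})^{k2^N}$. This is just exponent bookkeeping.

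Second, I will let $N\to\infty$, arguing in the ring of formal power series. Every term $\varphi(q^{2^N})$ and $\OL{B}_k(q^{2^N})$ equals $1+O(q^{2^N})$, since both series have constant term $1$. So the tail $\varphi(q^{2^N})^{k2^{N-1}}\OL{B}_k(q^{2^N})^{2^N}$ is $\equiv 1 \pmod{q^{2^N}}$, and the partial products agree with $\OL{B}_k(q)$ up to $q^{2^N-1}$. The factors $\varphi(q^{2^i})^{3k2^{i-1}}$ with $i\geq N$ are likewise $1+O(q^{2^N})$. Hence the infinite product converges $q$-adically, and its coefficients agree with those of $\OL{B}_k(q)$ in every degree. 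This gives the claim.

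The only point needing care is this limiting step: the exponents $2^N$ blow up, so one must use the $q$-adic valuation rather than analytic estimates. Since the tail's lowest nonconstant term has degree at least $2^N$, this causes no problem. As a sanity check, setting $k=1$ recovers Corollary \ref{b_1 phi defn}.
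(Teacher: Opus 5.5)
Your proof is correct, but it takes a different route from the paper. The paper disposes of this corollary in one line. Since $\OL{B}_k(q)=\OL{B}(q)^k$, it raises Corollary \ref{b_1 phi defn} (Sellers' product formula for $k=1$) to the $k$-th power. All iteration and convergence issues are thereby inherited from the cited result. The only thing used implicitly is that a fixed power of a $q$-adically convergent product of series with constant term $1$ can be taken factor by factor.

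You instead iterate the $k$-th power functional equation, Theorem \ref{b_k phi Recurrence}, and your argument checks out:
\begin{itemize}
\item The base case of your finite-stage identity is exactly Theorem \ref{b_k phi Recurrence}.
\item The exponent bookkeeping $k2^{N-1}+k2^N=3k\cdot 2^{N-1}$ is right, and so is the new tail $\varphi(q^{2^{N+1}})^{k2^N}\OL{B}_k(q^{2^{N+1}})^{2^{N+1}}$.
\item The $q$-adic limit is justified, since both the tail and the factors with $i\ge N$ are $1+O(q^{2^N})$.
\end{itemize}
In effect you are replaying, with every exponent multiplied by $k$, the iteration Sellers used to obtain Corollary \ref{b_1 phi defn} from Theorem \ref{b_1 phi recurrence}.

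Your route gives a self-contained derivation from the functional equation alone, with an explicit identity at each finite stage and a transparent convergence argument. The paper's route buys brevity by leaning entirely on the cited corollary. Both rest on the same $k=1$ input, because the paper obtains Theorem \ref{b_k phi Recurrence} itself by raising Theorem \ref{b_1 phi recurrence} to the $k$-th power.
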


Note that Theorem \ref{b_k phi Recurrence} and Corollary \ref{B_k Prod Defn}
immediately follow from Theorem \ref{b_1 phi recurrence} and Corollary \ref{b_1 phi defn} by raising both sides of the corresponding equations to the power $k$.

A priori, it should be clear that Corollary \ref{B_k Prod Defn} provides an extremely beneficial perspective on the generating function of $\OL{b}_k(n)$.  Simply put, given that 
$$\varphi(q) = 1 + 2\sum_{n\geq1}q^{n^2},$$
we must expect that $\OL{b}_k(n)$ will likely satisfy a wide variety of congruences modulo powers of 2.  This is true because, thanks to the Binomial Theorem, for a fixed positive integer $t$, we have 
$$
(\varphi(q))^t = \left(1 + 2\sum_{n\geq1}q^{n^2}\right)^t = \sum_{j=0}^t \binom{t}{j}2^j \left(\sum_{n\geq1}q^{n^2}\right)^j
$$
and the factors of the form $2^j$ will vanish modulo small powers of 2, providing a great deal of simplification when attempting to prove congruences satisfied by $\OL{b}_k(n)$ modulo a small power of 2.  

For example, with Corollary \ref{B_k Prod Defn} in hand, we can now greatly extend Theorem \ref{b_1 mod 4 characterization} which characterized the number of overcubic partitions modulo 4.

\begin{theorem}\label{Mod 4 Characterization}
We have the following:  
\begin{itemize}
\item{}    For all $n \geq 1$ and $k$ even, $\OL{b}_k(n) \equiv 0 \pmod{4}$. 
\smallskip  
\item{}   For all $n \geq 1$ and $k$ odd,
            \[ \OL{b}_k(n) \equiv \begin{cases}
                    2 \pmod{4} \text{ if } n = l^2 \text{ or } n = 2l^2 \text{, and} \\
                    0 \pmod{4} \text{ otherwise.}
                    \end{cases} \]
\end{itemize}
    \begin{proof}        
    Thanks to Equation \eqref{phi_sum_defn} and Corollary \ref{B_k Prod Defn}, we have 
        \begin{align*}
            \OL{B}_k(q) = \sum_{n\geq0}\OL{b}_k(n)q^n &= \varphi^k(q)\varphi^{3k}(q^2)\prod_{i\geq2}^{\infty} \left(\varphi(q^{2i})\right)^{3k\cdot2^{i-1}} \\
            &\equiv \varphi^k(q)\varphi^{3k}(q^2) \pmod{4} \\
            &\equiv \left(1 + 2\sum_{l\geq1}q^{l^2}\right)^k\left(1 + 2\sum_{l\geq1}q^{2l^2}\right)^{k\cdot3}  \pmod{4}.
        \end{align*}
        Consider the case of $k$ even.
        Then, $k =  2j$ for some $j \in \Z$.
        Now, our congruence becomes
        \begin{align*}
            \sum_{n\geq0}\OL{b}_{2j}(n)q^n
            &\equiv \left(1 + 2\sum_{l\geq1}q^{l^2}\right)^{2\cdot j}\left(1 + 2\sum_{l\geq1}q^{2l^2}\right)^{2\cdot3j}  \pmod{4} \\
            &\equiv \left(1 + 4\sum_{l\geq1}q^{l^2} + 4\left(\sum_{l\geq1}q^{l^2}\right)^2\right)^{j}\left(1 + 4\sum_{l\geq1}q^{2l^2} + 4\left(\sum_{l\geq1}q^{2l^2}\right)^2\right)^{3j}  \pmod{4} \\
            &\equiv 1 \pmod{4}.
        \end{align*}
        This yields our first result.
        
        Next, consider the case of $k$ odd.
        Then, $k =  2j+1$ for some $j \in \Z$.
        Using this and our work above, we know
        \begin{align*}
            \sum_{n\geq0}\OL{b}_{2j+1}(n)q^n
            &\equiv \left(1 + 2\sum_{l\geq1}q^{l^2}\right)^{2\cdot j+1}\left(1 + 2\sum_{l\geq1}q^{2l^2}\right)^{(2\cdot j+1)\cdot3}  \pmod{4} \\
            &\equiv \left(1 + 2\sum_{l\geq1}q^{l^2}\right)^{2\cdot j}\left(1 + 2\sum_{l\geq1}q^{l^2}\right)\left(1 + 2\sum_{l\geq1}q^{2l^2}\right)^{2j\cdot3+2}\left(1 + 2\sum_{l\geq1}q^{2l^2}\right)  \pmod{4} \\
            &\equiv \left(1 + 2\sum_{l\geq1}q^{l^2}\right)\left(1 + 2\sum_{l\geq1}q^{2l^2}\right)  \pmod{4}.
        \end{align*}
This yields our second result.  
    \end{proof}
\end{theorem}

As an aside, one could easily imagine using the above proof technique to generalize Theorem \ref{b_1 mod 8 characterization} as well.  Admittedly, the $q$-series manipulations would become slightly more complicated than what appears above for the mod 4 characterization; nevertheless, this could be done.  We leave the details to the interested reader.  

Given this new perspective on finding characterizations modulo small powers of 2 for the functions $\OL{b}_k(n)$, we now wish to revisit some of the results that already exist in the literature with the goal of providing very elementary (and short) proofs.  

As an example, Shivaprasada Nayaka et al. \cite{TheOG} note many congruences modulo 4 satisfied by $\OL{b}_3(n)$.  
Using Theorem \ref{Mod 4 Characterization}, we can verify some of their results by way of example.

\begin{theorem}[\cite{TheOG}] For all $m, \alpha \geq 0$,
    \begin{align*}
        \OL{b}_3(64m+48) &\equiv 0 \pmod{4},\\
        \OL{b}_3(128m+80) &\equiv 0 \pmod{4}, \text{ and}\\
        \OL{b}_3(256\cdot 2^\alpha m+192) &\equiv 0 \pmod{4}.
    \end{align*}
    \begin{proof}
        We provide a proof which relies on Theorem \ref{Mod 4 Characterization} and is very different in nature to the proofs used in \cite{TheOG}.  Using Theorem \ref{Mod 4 Characterization}, we only need to prove that the arithmetic progressions mentioned above can never contain a square or twice a square.
        \begin{itemize}
            \item We have $64m + 48 = 16(4m+3)$.
            Now, $4m+3 \equiv 3 \pmod{4}$, hence it cannot be square.
            Therefore, no number of the form $16(4m+3) = 64m + 48$ can be square.
            Also, $64m + 48 = 2\cdot4(8m+6)$.
            Note that $8m + 6 \equiv 2 \pmod{4}$.
            Therefore, no number of the form $64m + 48$ can be twice a square.
            \item We have $128m + 80 = 16(8m+5)$.
            We know $8m+5 \equiv 5 \pmod{8}$, and the whole expression cannot be a square.
            Also, $128m + 80 = 2\cdot4(16m+10)$.
            Note that $16m + 10 \equiv 2 \pmod{8}$, and the whole expression cannot be twice a square.
            \item We have $256\cdot 2^\alpha m+192 = 64(4\cdot2^\alpha m+3)$.
            We know $4\cdot2^\alpha m+3 \equiv 3 \pmod{4}$, so the whole expression cannot be a square.
            Also $256\cdot 2^\alpha m+192 = 2\cdot16(8\cdot2^\alpha m+6)$.
            Note that $8\cdot2^\alpha m+6 \equiv 2 \pmod{4}$, so the whole expression cannot be a square.            
        \end{itemize}
Therefore, by Theorem \ref{Mod 4 Characterization}, all of the congruences mentioned above must hold.
    \end{proof}
\end{theorem}
Note that the specific value $k=3$ does not play a role in the above proof (other than the fact that 3 is odd).  Therefore, we see that the above theorem can be extended immediately to an infinite family of congruences modulo 4.  

\begin{theorem} For all $k, m, \alpha \geq 0$,
    \begin{align*}
        \OL{b}_{2k+1}(64m+48) &\equiv 0 \pmod{4},\\
        \OL{b}_{2k+1}(128m+80) &\equiv 0 \pmod{4}, \text{ and}\\
        \OL{b}_{2k+1}(256\cdot 2^\alpha m+192) &\equiv 0 \pmod{4}.
    \end{align*}
\end{theorem}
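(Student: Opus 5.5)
The plan is to reduce everything to Theorem \ref{Mod 4 Characterization}, as in the preceding proof for $k=3$. Since $2k+1$ is odd for every $k\geq 0$, the second bullet of Theorem \ref{Mod 4 Characterization} applies to $\OL{b}_{2k+1}(n)$ for all $n\geq 1$. Thus $\OL{b}_{2k+1}(n)\equiv 0 \pmod{4}$ unless $n=l^2$ or $n=2l^2$ for some integer $l$.

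All three progressions consist of positive integers, so $n\geq 1$ holds automatically. It therefore suffices to show that no integer in these progressions is a square or twice a square. For this I would use a $2$-adic valuation argument, which handles all three cases uniformly. Write $n=2^{v}u$ with $u$ odd. Then:
\begin{itemize}
\item $n$ is a square only if $v$ is even and $u\equiv 1\pmod 8$;
\item $n$ is twice a square only if $v$ is odd and $u\equiv 1 \pmod 8$.
\end{itemize}
So it is enough to check that the odd part $u$ of each $n$ in the progression satisfies $u\not\equiv 1\pmod 8$, regardless of the parity of $v$.

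The three cases are as follows.
\begin{itemize}
\item $64m+48=2^4(4m+3)$, with odd part $4m+3\equiv 3\pmod 4$.
\item $128m+80=2^4(8m+5)$, with odd part $8m+5\equiv 5 \pmod 8$.
\item $256\cdot 2^\alpha m+192=2^6(4\cdot 2^\alpha m+3)$, with odd part $\equiv 3 \pmod 4$.
\end{itemize}
In each case the odd part is not $\equiv 1 \pmod 8$, so $n$ is neither $l^2$ nor $2l^2$. Theorem \ref{Mod 4 Characterization} then gives $\OL{b}_{2k+1}(n)\equiv 0\pmod 4$.

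There is no real obstacle here. The only care needed is to phrase the square and twice-a-square exclusion correctly. This also cleans up the earlier $k=3$ argument, where the final bullet said ``square'' in place of ``twice a square''; the odd-part criterion settles both exclusions at once. The fact that $k$ is arbitrary plays no role beyond the parity of $2k+1$.
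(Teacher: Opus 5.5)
Your proposal is correct and follows the paper's route. The paper obtains this family by noting that its $k=3$ argument used only the oddness of $k$, so Theorem~\ref{Mod 4 Characterization} reduces everything to showing that these progressions contain no $l^2$ or $2l^2$. Your odd-part criterion ($u\not\equiv 1 \pmod 8$) excludes both at once and is tidier than the paper's separate checks for squares and twice squares. You are also right that the paper's third bullet should read ``twice a square''.
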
   

Of course, the above theorem can be generalized even further to include other arithmetic progressions, as long as such arithmetic progressions never contain a square or twice a square.  We stop here for the sake of brevity.  

\section{Congruences Involving Odd Moduli}
\label{sec:OddModuli}
We now transition to congruences satisfied by $\OL{b}_k(n)$ for various values of $k$ where the moduli in question are odd.  
We begin by focusing on a few results for specific values of $k$.  

\begin{theorem}
    For all $m \geq 0$, $\OL{b}_{4}(14m + 7) \equiv 0 \pmod{7}$.
    \begin{proof}
        We have
        $$\OL{B}_4(q)= \sum_{n\geq0}\OL{b}_{4}(n)q^{n} = \frac{f_4^4}{f_1^{8}f_2^4} = \frac{f^4_4}{f^4_2}\frac{1}{f^8_1}= \frac{f^4_4}{f^4_2}\left(\frac{1}{f^4_1}\right)^2.$$
        Using Theorem \ref{reciprocal of f_1^4 2-dissection}, our equation becomes
        $$
            \sum_{n\geq0}\OL{b}_{4}(n)q^{n} = \frac{f^4_4}{f^4_2}\left( \frac{f^{14}_4}{f^{14}_2f^4_8} +4q\frac{f_4^2f_8^4}{f_2^{10}} \right)^2 
            = \frac{f^4_4}{f^4_2}\left( \frac{f^{28}_4}{f^{28}_2f^8_8} +  8q\frac{f^{14}_4}{f^{14}_2f^4_8}\frac{f_4^2f_8^4}{f_2^{10}}+16q^2\frac{f_4^4f_8^8}{f_2^{20}} \right).
        $$
        Considering the odd powers of $q$ yields
        \begin{align*}
            \sum_{n\geq0}\OL{b}_{4}(2n+1)q^{2n+1} &=\frac{f^4_4}
            {f^4_2}\left( 8q\frac{f^{14}_4}{f^{14}_2f^4_8}\frac{f_4^2f_8^4}{f_2^{10}} \right)
            = 8q \frac{f_4^{20}}{f_2^{28}}.
        \end{align*}
        Dividing both sides by $q$ and substituting $q$ for $q^2$, our generating function becomes
        \begin{align*}
            \sum_{n\geq0}\OL{b}_{4}(2n+1)q^{n} &= 8 \frac{f_2^{20}}{f_1^{28}} = 8\frac{f_2^{21}}{f_1^{28}f_2} \\
            &\equiv \frac{f_{14}^3}{f_7^4}\frac{1}{f_2} \pmod{7}\\
            &= \frac{f_{14}^3}{f_7^4}\sum_{N\geq0} p(N)q^{2N}.
        \end{align*}
        The powers of $q$ in the last expression above are of the form $7N' + 2N$ for some integers $N'$ and $N$.
        Our goal remains to find $\OL{b}_4(14m+7)$, and substituting $7m+3$ for $n$ above creates such an argument within our function.
        Completing this substitution, and then comparing the powers of $q$ on both sides, we have
        \[
        \begin{aligned}[t]
            && 7m  + 3 &= 7N' + 2N & \\
            \Longrightarrow && 3 &\equiv 2N && \pmod{7} \\
            \Longrightarrow && 4(3) &\equiv 4(2N) && \pmod{7} \\
            \Longrightarrow && 5 &\equiv N && \pmod{7}.
        \end{aligned}
        \]    
        Thus, $N = 7j + 5$ for some integer $j$. This means
        $$\sum_{n\geq0}\OL{b}_{4}(14m+7)q^{7m+3} \equiv \frac{f_{14}^3}{f_7^4}\sum p(7j+5)q^{14j+10} \pmod{7}.$$
        By Theorem \ref{ramanujanCongruences}, we know $p(7j+5) \equiv 0 \pmod{7}$  for all $j\geq0$.  Thus, since $\frac{f_{14}^3}{f_7^4}$ is a function of $q^7$, our result holds.  
    \end{proof}
\end{theorem}

We can prove a similar result for $\OL{b}_4(n)$ modulo 11.
    
\begin{theorem}
    For all $m\geq0$, $\OL{b}_{4}(22m + 11) \equiv 0 \pmod{11}$.
    \begin{proof}
        Using the work in the proof of the previous theorem, we know
        $$\sum_{n\geq0}\OL{b}_{4}(2n+1)q^{2n+1} = 8q \frac{f_4^{20}}{f_2^{28}} = 8q \frac{f_4^{22}}{f_2^{33}}\frac{f_2^5}{f_4^2}.$$
        Using Theorem \ref{funPentagonalNumberResults}, our equation becomes
        $$\sum_{n\geq0}\OL{b}_{4}(2n+1)q^{2n+1} =8q \frac{f_4^{22}}{f_2^{33}}\left(\sum_{N=-\infty}^\infty (6N+1)q^{3N^2+N}\right).$$
        Dividing both sides by $q$ and substituting $q$ for $q^2$, we have
         $$
            \sum_{n\geq0}\OL{b}_{4}(2n+1)q^n = 8\frac{f_2^{22}}{f_1^{33}}\left(\sum_{N=-\infty}^\infty (6N+1)q^{\frac{3N^2+N}{2}}\right) 
         \equiv 8\frac{f_{22}^{2}}{f_{11}^{3}}\left(\sum_{N=-\infty}^\infty (6N+1)q^{\frac{3N^2+N}{2}}\right) \pmod{11}.
        $$
        Our goal remains to find $\OL{b}_4(22m+11)$, and substituting $11m+5$ for $n$ above creates such a pattern within our function.
        Completing this substitution, and then comparing the powers of $q$ on both sides of the congruence, we have
        \[
        \begin{aligned}[t]
            && 11m + 5 &= 11N' + \frac{3N^2+N}{2} & \\
            \Longrightarrow && 5 &\equiv \frac{3N^2+N}{2} && \pmod{11} \\
            \Longrightarrow && 24\cdot5 &\equiv 24\left(\frac{3N^2+N}{2}\right) && \pmod{11} \\
            \Longrightarrow && 120+1 &\equiv 36N^2+12N+1 && \pmod{11} \\
            \Longrightarrow && 121 &\equiv (6N+1)^2 && \pmod{11} \\
            \Longrightarrow && 0 &\equiv (6N+1)^2 && \pmod{11}.
        \end{aligned}
        \]    
        This means that, within the arithmetic progression of interest to us, $6N+1 \equiv 0 \pmod{11}$.  Since $\frac{f_{22}^{2}}{f_{11}^{3}}$ is a function of $q^{11}$, our result follows.  
    \end{proof}
\end{theorem}
Next, we have a congruence modulo 5 satisfied by $\OL{b}_8(n)$.
\begin{theorem}\label{b8(5m)}
    For all $m\geq1$, $\OL{b}_{8}(5m) \equiv 0 \pmod{5}$.
    % from 2025-11-10
    \begin{proof}
        We know
$$
\sum_{n\geq0}\OL{b}_{8}(n)q^{n} = \frac{f_4^8}{f_1^{16}f_2^8}
            =\frac{f_4^{5}}{f_1^{15}f_2^{10}}\frac{f_2^2f_4^3}{f_1}
            =\frac{f_4^{5}}{f_1^{15}f_2^{10}}\frac{f_2^2}{f_1}f_4^3
            =\frac{f_4^{5}}{f_1^{15}f_2^{10}}\psi(q)f_4^3.
$$
        Using Equation \eqref{5-Dissection},  and considering only the terms of our $q$-series of the form $q^{5n}$, our equation becomes
        $$
        \sum_{n\geq0}\OL{b}_{8}(5n)q^{5n} = \frac{f_4^{5}}{f_1^{15}f_2^{10}}\left(f_5^3\varphi(-q^{10})-5q^{15}f_{100}^3\psi(q^{25})\right)
        \equiv \frac{f_{20}}{f_5^{3}f_{10}^{2}}f_5^3\varphi(-q^{10}) \pmod{5}.
        $$
        Now, we substitute $q$ for $q^5$ to get
        \begin{align*}
        \sum_{n\geq0}\OL{b}_{8}(5n)q^{n} &\equiv \frac{f_{4}}{f_1^{3}f_{2}^{2}}f_1^3\varphi(-q^{2}) \pmod{5}\\
        &\equiv \frac{f_{4}}{f_{2}^{2}}\frac{f_2^2}{f_4} \pmod{5} \text{ from Equation \eqref{Negative Phi Gen Fn}}\\
        &\equiv 1 \pmod{5} \\
        &\equiv 1 +0q+0q^2+0q^3+0q^4+\dots \pmod{5}.
        \end{align*}
        Therefore, for all $m\geq1$, $\OL{b}_{8}(5m) \equiv 0 \pmod{5}$.
    \end{proof}
\end{theorem}

Interestingly enough, Theorem \ref{b8(5m)} can be used to prove an infinite family of congruences modulo 5.  

\begin{theorem}
\label{b8(5m)generalized}    
For all $j\geq1$, $m\geq 0$, and all $1\leq r\leq 4$, $\OL{b}_{25j+8}(5(5m+r)) \equiv 0 \pmod{5}$.
\end{theorem}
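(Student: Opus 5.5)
We factor the generating function as
$$\OL{B}_{25j+8}(q) = \OL{B}_{25j}(q)\,\OL{B}_8(q) = \left(\frac{f_4^{j}}{f_1^{2j}f_2^{j}}\right)^{25}\OL{B}_8(q).$$
By Lemma \ref{Expanded Freshman's Gn Fn} with $p=5$ and $\alpha=1$, we have $f_l^{25}\equiv f_{5l}^5 \pmod{25}$, and in particular modulo $5$. Applying this to each factor gives
$$\OL{B}_{25j}(q)\equiv \frac{f_{20}^{5j}}{f_5^{10j}f_{10}^{5j}} \pmod 5.$$
(Actually $f_l^{5}\equiv f_{5l}$ already suffices, and it shows this is a series in $q^{25}$: we get $f_{20}^{5j}\equiv f_{100}^j$ and similarly for the other factors.) So $\OL{B}_{25j}(q)\equiv G(q^{25}) \pmod 5$ for some power series $G$ with integer coefficients. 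This is the key point, and it is why $25$ rather than $5$ appears in the statement.

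Writing $G(q^{25})=\sum_{t} g(t)q^{25t}$, we get
$$\OL{b}_{25j+8}(n)\equiv \sum_{t\ge 0} g(t)\,\OL{b}_8(n-25t) \pmod 5.$$
Take $n=5(5m+r)=25m+5r$ with $1\le r\le 4$. Then $n-25t=5\bigl(5(m-t)+r\bigr)$, which is a positive multiple of $5$ whenever $t\le m$, because $5(m-t)+r\ge r\ge 1$. For $t>m$ the term does not appear, since the argument would be negative. By Theorem \ref{b8(5m)}, every term with $t\le m$ is $\equiv 0 \pmod 5$, and hence the sum is as well.

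The main point to check is the bookkeeping that the exponents of $G$ are multiples of $25$, which follows from two applications of the ``freshman's dream'' congruence. The other thing to check is that the condition $1\le r\le 4$ is exactly what excludes the index $0$, where $\OL{b}_8(0)=1$. With $r=0$ the conclusion would fail, because the term $t=m$ contributes $g(m)\OL{b}_8(0)$. The hypothesis $j\ge 1$ plays no essential role, since $j=0$ is just Theorem \ref{b8(5m)} itself.
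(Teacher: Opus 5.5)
Your proof is correct and follows the paper's argument: the paper also writes $\OL{B}_{25j+8}(q)=\OL{B}_{25j}(q)\,\OL{B}_8(q)\equiv \frac{f_{100}^{j}}{f_{25}^{2j}f_{50}^{j}}\OL{B}_8(q)\pmod 5$, and relies on the first factor being a series in $q^{25}$. The paper then extracts the $q^{5n}$ terms, using $\sum_{n\geq0}\OL{b}_8(5n)q^n\equiv 1 \pmod 5$ from the proof of Theorem \ref{b8(5m)}, to get $\sum_{n\geq0}\OL{b}_{25j+8}(5n)q^n\equiv \frac{f_{20}^{j}}{f_5^{2j}f_{10}^{j}}\pmod 5$; you do the same bookkeeping coefficient-wise via the convolution with $G$, which is only a cosmetic difference.
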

\begin{proof}
Note that 
\begin{align*}
\sum_{n\geq0}\OL{b}_{25j+8}(n)q^{n} 
&= \frac{f_4^{25j+8}}{f_1^{2(25j+8)}f_2^{25j+8}}   \\        
&=\frac{f_4^{25j}}{f_1^{50j}f_2^{25j}}\cdot \frac{f_4^{8}}{f_1^{16}f_2^{8}} \\ 
&= \frac{f_4^{25j}}{f_1^{50j}f_2^{25j}}\sum_{n\geq0}\OL{b}_{8}(n)q^{n} \\
&\equiv 
\frac{f_{100}^{j}}{f_{25}^{2j}f_{50}^{j}}\sum_{n\geq0}\OL{b}_{8}(n)q^{n} \pmod{5}.
\end{align*}
Thus, we see that 
\begin{align*}
\sum_{n\geq0}\OL{b}_{25j+8}(5n)q^{n} 
&\equiv 
\frac{f_{20}^{j}}{f_{5}^{2j}f_{10}^{j}}\sum_{n\geq0}\OL{b}_{8}(5n)q^{n} \pmod{5} \\
&\equiv \frac{f_{20}^{j}}{f_{5}^{2j}f_{10}^{j}}\cdot 1 \pmod{5} 
\end{align*}
thanks to the proof of Theorem \ref{b8(5m)}.  Since $\frac{f_{20}^{j}}{f_{5}^{2j}f_{10}^{j}}$ is a function of $q^5$, we know that, for any $n$ which is not divisible by 5, we must have
$\OL{b}_{25j+8}(5n) \equiv 0 \pmod{5}$.  The theorem follows.  
\end{proof}

We next move to a set of divisibility properties modulo 3. We begin with a pair of congruences modulo 3 satisfied by $\OL{b}_5(n)$.

\begin{theorem}\label{b5(27m+9)}
For all $m\geq0$, $\OL{b}_5(27m+9) \equiv \OL{b}_5(27m+18) \equiv 0 \pmod{3}$.
    % from 2025-11-03 and proof method from 2025-10-22
    \begin{proof}
        Note that replacing $q$ by $-q$ in a partition function will not change its divisibility properties.
        Using Corollary \ref{Negative Overcubic} and Equation \eqref{Psi Gen Fn}, we have
       $$
            \sum_{n\geq0}\OL{b}_5(n)(-q)^n = \frac{f^{10}_1f^{15}_4}{f^{35}_2} 
            = \frac{f^{9}_1f^{15}_4}{f^{33}_2}\frac{f_1}{f^{2}_2} 
            \equiv \frac{f^{3}_3f^{5}_{12}}{f^{11}_6}\frac{1}{\psi(q)} \pmod{3}.
       $$
        Using Equation \eqref{3-Dissection Reciprocal Psi}, we know
        $$\sum_{n\geq0}\OL{b}_5(n)(-q)^n \equiv \frac{f^{3}_3f^{5}_{12}}{f^{11}_6}\left(\frac{\psi(q^9)}{\psi(q^3)^4}\left(P(q^3)^2-qP(q^3)\psi(q^9)+q^2\psi(q^9)^2\right) \right) \pmod{3}.$$
        Define $B'_5(n) = (-1)^n\OL{b}_5(n)$ so that
        $$\sum_{n\geq0}B'_5(n)q^n \equiv \frac{f^{3}_3f^{5}_{12}}{f^{11}_6}\left(\frac{\psi(q^9)}{\psi(q^3)^4}\left(P(q^3)^2-qP(q^3)\psi(q^9)+q^2\psi(q^9)^2\right) \right) \pmod{3}.$$
        Considering the terms of our $q$-series of the form $q^{3n}$, our congruence becomes
        $$\sum_{n\geq0}B'_5(3n)q^{3n} \equiv \frac{f^{3}_3f^{5}_{12}}{f^{11}_6}\frac{\psi(q^9)}{\psi(q^3)^4}P(q^3)^2 \pmod{3}.$$
        Substituting $q$ for $q^3$, we get        
        \begin{align*}
            \sum_{n\geq0}B'_5(3n)q^n
            &\equiv \frac{f^{3}_1f^{5}_{4}}{f^{11}_2}\frac{\psi(q^3)}{\psi(q)^4}P(q)^2 \pmod{3} \\
            &\equiv \frac{f^{3}_1f^{6}_{4}}{f^{12}_2}\frac{f_2}{f_4}\frac{\psi(q^3)}{\psi(q^3)\psi(q)}P(q)^2 \pmod{3} \\
            &\equiv \frac{f_3f^{2}_{12}}{f^{4}_6}\frac{f_2}{f_4}\frac{f_1}{f^2_2}P(q)^2 \pmod{3} \\
            &\equiv \frac{f_3f^{2}_{12}}{f^{4}_6}\frac{f_1}{f_2f_4}P(q)^2 \pmod{3}.
        \end{align*}
        Using Theorem \ref{P(q) Gen Fn}, we have
        \begin{align*}
            \sum_{n\geq0}B'_5(3n)q^n 
            &\equiv \frac{f_3f^{2}_{12}}{f^{4}_6}\frac{f_1}{f_2f_4}\left(\frac{f_2f^2_3}{f_1f_{6}}\right)^2 \pmod{3} \\
            &\equiv \frac{f_3f^{2}_{12}}{f^{4}_6}\frac{f_1}{f_2f_4}\frac{f^2_2f^4_3}{f^2_1f^2_{6}} \pmod{3} \\
            &\equiv \frac{f^5_3f^{2}_{12}}{f^{6}_6}\frac{f_2}{f_1f_4} \pmod{3} \\
            &\equiv \frac{f^5_3f^{2}_{12}}{f^{6}_6}\frac{1}{\psi(-q)} \pmod{3}
        \end{align*}
        using Equation \eqref{Negative Psi Gen Fn}.
        Using the 3-dissection from Equation \eqref{3-Dissection Reciprocal Psi} and substituting $-q$ for $q$, we know
        $$ \sum_{n\geq0}B'_5(3n)q^n \equiv \frac{f^5_3f^{2}_{12}}{f^{6}_6}\left(\frac{\psi(-q^9)}{\psi(-q^3)^4}\left(P(-q^3)^2+qP(-q^3)\psi(-q^9)-q^2\psi(-q^9)^2\right)\right) \pmod{3}.$$
        Considering the terms of the above $q$-series of the form $q^{3n}$, our congruence becomes
        $$ \sum_{n\geq0}B'_5(9n)q^{3n}\equiv \frac{f^5_3f^{2}_{12}}{f^{6}_6}\frac{\psi(-q^9)}{\psi(-q^3)^4}P(-q^3)^2 \pmod{3}.$$
        Substituting $q$ for $q^3$ again, we get
        $$\sum_{n\geq0}B'_5(9n)q^{n}  \equiv \frac{f^5_1f^{2}_{4}}{f^{6}_2}\frac{\psi(-q^3)}{\psi(-q)^4}P(-q)^2 \pmod{3}.$$
        Applying Corollary \ref{Negative P(q) Gen Fn}, our congruence becomes
        \begin{align*}
            \sum_{n\geq0}B'_5(9n)q^{n} &\equiv \frac{f^5_1f^{2}_{4}}{f^{6}_2}\frac{\psi(-q)^3}{\psi(-q)^4}\left(\frac{f_1f_4f^5_6}{f^2_2f^2_3f^2_{12}}\right)^2 \pmod{3} \\
            &\equiv \frac{f^5_1f^{2}_{4}}{f^{6}_2}\frac{1}{\psi(-q)}\frac{f_1^2f_4^2f_6^{10}}{f_2^4f_3^4f_{12}^4}\pmod{3} \\
            &\equiv \frac{f^5_1f^{2}_{4}}{f^{6}_2}\frac{f_2}{f_1f_4}\frac{f_1^2f_4^2}{f_2^4}\frac{f_6^{10}}{f_3^4f_{12}^4}
            \pmod{3} \\
            &\equiv \frac{f_1^6f_4^3}{f_2^9}\frac{f_6^{10}}{f_3^4f_{12}^4}
            \pmod{3} \\
            &\equiv \frac{f_3^2f_{12}}{f_6^3}\frac{f_6^{10}}{f_3^4f_{12}^4}
            \pmod{3} \\
            &\equiv \frac{f^7_6}{f^2_3f^3_{12}} \pmod{3}.
        \end{align*}
        Since this last expression is a function of $q^3$, we have our desired result.
    \end{proof}
\end{theorem}

We close this section by proving congruence families modulo 3 for infinitely many different functions $\OL{b}_k(n)$ along specific arithmetic progressions.  The proofs still follow from the elementary techniques demonstrated above.    

\begin{theorem}
    For all $l, m \geq 0$, $\OL{b}_{27l + 8}(27m + 18) \equiv 0 \pmod{3}$.
    % from 2025-11-17
    \begin{proof}
        We know
        \begin{align*}
        \sum_{n\geq0}\OL{b}_{27l+8}(n)q^{n} &= \frac{f_4^{27l+8}}{f_1^{54l+16}f_2^{27l+8}} \\
        &= \frac{f_4^{27l+9}}{f_1^{54l+15}f_2^{27l+9}}\frac{f_2}{f_1f_4}\\
        &\equiv \frac{f_{12}^{9l+3}}{f_3^{18l+5}f_6^{9l+3}}\frac{1}{\psi(-q)} \pmod{3}\\
        &\equiv \frac{f_{12}^{9l+3}}{f_3^{18l+5}f_6^{9l+3}}\left(\frac{\psi(-q^9)}{\psi(-q^3)^4}\left(P(-q^3)^2+qP(-q^3)\psi(-q^9)-q^2\psi(-q^9)^2\right)\right) \pmod{3}.
        \end{align*}
        Considering the terms of our $q$-series of the form $q^{3n}$, our congruence becomes
        $$\sum_{n\geq0}\OL{b}_{27l+8}(3n)q^{3n} \equiv \frac{f_{12}^{9l+3}}{f_3^{18l+5}f_6^{9l+3}}\frac{\psi(-q^9)}{\psi(-q^3)^4}P(-q^3)^2 \pmod{3}.$$
        Substituting $q$ for $q^{3}$, we now have
        \begin{align*}
            \sum_{n\geq0}\OL{b}_{27l+8}(3n)q^{n} &\equiv \frac{f_{4}^{9l+3}}{f_1^{18l+5}f_2^{9l+3}}\frac{1}{\psi(-q)}P(-q)^2 \pmod{3} \\
            &\equiv \frac{f_{12}^{3l+1}}{f_1^{18l+3}f_1^2f_6^{3l+1}}\frac{f_2}{f_1f_4}\left(\frac{f_1f_4f^5_6}{f^2_2f^2_3f^2_{12}}\right)^2 \pmod{3} \\
            &\equiv \frac{f_{12}^{3l+1}}{f_3^{6l+1}f_6^{3l+1}}\frac{f_2}{f_1^3f_4}\frac{f_1^2f_4^2}{f^4_2}\frac{f_6^{10}}{f_3^4f_{12}^4} \pmod{3} \\
            &\equiv \frac{f_{12}^{3l-3}}{f_3^{6l+5}f_6^{3l-9}}\frac{f_4}{f_1f_2^3} \pmod{3} \\
            &\equiv \frac{f_{12}^{3l-3}}{f_3^{6l+5}f_6^{3l-9}}\frac{1}{f_6}\frac{f_4}{f_1} \pmod{3} \\
            &\equiv \frac{f_{12}^{3l-3}}{f_3^{6l+5}f_6^{3l-8}}\frac{f_4}{f_1} \pmod{3} \\
            &\equiv \frac{f_{12}^{3l-3}}{f_3^{6l+5}f_6^{3l-8}}\sum_{n\geq 0}ped(n)q^n \pmod{3}
        \end{align*}
        using Theorem \ref{even parts distinct partition}.
        Using Theorem \ref{ped dissection} and considering the terms of our $q$-series of the form $q^{3n}$, our congruence becomes
        \begin{align*}
            \sum_{n\geq0}\OL{b}_{27l+8}(9n)q^{3n} &\equiv \frac{f_{12}^{3l-3}}{f_3^{6l+5}f_6^{3l-8}}\frac{f_{12}f_{18}^4}{f_3^3f_{36}^2} \pmod{3}\\
            &\equiv \frac{f_{12}^{3l-2}f_{18}^4}{f_3^{6l+8}f_6^{3l-8}f_{36}^2}\pmod{3}.
        \end{align*}
        Once more, substituting $q$ for $q^{3}$, we get
        \begin{align*}
            \sum_{n\geq0}\OL{b}_{27l+8}(9n)q^{n} &\equiv \frac{f_{4}^{3l-2}f_{6}^4}{f_1^{6l+8}f_2^{3l-8}f_{12}^2} \pmod{3} \\
            &\equiv \frac{f_{6}^4}{f_{12}^2} \frac{f_{4}^{3l-2}}{f_1^{6l+8}f_2^{3l-8}} \pmod{3} \\
            &\equiv \frac{f_{6}^4}{f_{12}^2} \frac{f_{4}^{3l-3}}{f_1^{6l+9}f_2^{3l-9}}\frac{f_1f_4}{f_2} \pmod{3} \\
            &\equiv \frac{f_{6}^4}{f_{12}^2} \frac{f_{12}^{l-1}}{f_3^{2l+3}f_6^{l-3}}\psi(-q) \pmod{3} \\
            &\equiv \frac{f_{6}^4}{f_{12}^2} \frac{f_{12}^{l-1}}{f_3^{2l+3}f_6^{l-3}}\left(P(-q^3)-q\psi(-q^9)\right) \pmod{3}.
        \end{align*}
Note that, in this last expression, there are no terms of the form $q^{3m+2}$.  Therefore, we know that, for all $m$, $\OL{b}_{27l + 8}(9(3m+2)) = \OL{b}_{27l + 8}(27m + 18) \equiv 0 \pmod{3}$.
    \end{proof}
\end{theorem}

\begin{theorem}
For all $l$ and $m \geq 0$, 
\begin{align*}
\OL{b}_{27l+10}(3m+2) &\equiv 0 \pmod{3}, \\
\OL{b}_{27l+10}(27m+18) &\equiv 0 \pmod{3}. 
\end{align*}
    \begin{proof}
        We know
        \begin{align*}
            \sum_{n\geq0}\OL{b}_{27l+10}(n)q^{n} &= \frac{f_4^{27l+10}}{f_1^{54l+20}f_2^{27l+10}} \\
            &= \frac{f_4^{27l+9}}{f_1^{54l+21}f_2^{27l+9}}\frac{f_1f_4}{f_2}\\
            &\equiv \frac{f_{12}^{9l+3}}{f_3^{18l+7}f_6^{9l+3}}\left(P(-q^3)-q\psi(-q^9)\right)\pmod{3}.
        \end{align*}
Note that the power series representation of the final expression above contains no terms of the form $q^{3n+2}$, so our first congruence holds. 

Next, considering the terms of our $q$-series of the form $q^{3n}$, our congruence becomes
        $$\sum_{n\geq0}\OL{b}_{27l+10}(3n)q^{3n} \equiv \frac{f_{12}^{9l+3}}{f_3^{18l+7}f_6^{9l+3}}P(-q^3) \pmod{3}.$$
        Substituting $q$ for $q^{3}$, we now have
        \begin{align*}
            \sum_{n\geq0}\OL{b}_{27l+10}(3n)q^{n}  &\equiv \frac{f_{4}^{9l+3}}{f_1^{18l+7}f_2^{9l+3}}P(-q) \pmod{3} \\
            &\equiv \frac{f_{4}^{9l+3}}{f_1^{18l+7}f_2^{9l+3}}\frac{f_1f_4f^5_6}{f^2_2f^2_3f^2_{12}} \pmod{3} \\
            &\equiv \frac{f_{4}^{9l+3}}{f_1^{18l+6}f_2^{9l+6}}\frac{f_2f_4f^5_6}{f^2_3f^2_{12}} \pmod{3} \\
            &\equiv \frac{f_{12}^{3l+1}}{f_3^{6l+2}f_6^{3l+2}}\frac{f^5_6}{f^2_3f^2_{12}}F(q^2) \pmod{3} \text{ \ \ from Theorem \ref{Capital F Dissection}} \\
            &\equiv \frac{f_{12}^{3l-1}}{f_3^{6l+4}f_6^{3l-3}}F(q^{18})\left(X(q^6)^{-1}-q^2-2q^4X(q^6)\right) \pmod{3}.
        \end{align*}
        Again considering the terms of our $q$-series of the form $q^{3n}$, our congruence becomes
        \begin{align*}
            \sum_{n\geq0}\OL{b}_{27l+10}(9n)q^{3n} &\equiv \frac{f_{12}^{3l-1}}{f_3^{6l+4}f_6^{3l-3}}F(q^{18})X(q^6)^{-1} \pmod{3}.
        \end{align*}
        Once more, substituting $q$ for $q^{3}$, we get
        \begin{align*}
            \sum_{n\geq0}\OL{b}_{27l+10}(9n)q^{n} &\equiv \frac{f_{4}^{3l-1}}{f_1^{6l+4}f_2^{3l-3}}F(q^{6})X(q^2)^{-1} \pmod{3} \\
            &\equiv \frac{f_{4}^{3l-1}}{f_1^{6l+4}f_2^{3l-3}}f_6f_{12}\frac{f_4f_6^3}{f_2f_{12}^3} \pmod{3} \\
            &\equiv \frac{f_{4}^{3l}}{f_1^{6l+3}f_2^{3l}}\frac{f_2^2}{f_1}\frac{f_6^4}{f_{12}^2} \pmod{3} \\
            &\equiv \frac{f_{12}^{l}}{f_3^{2l+1}f_6^{l}}\frac{f_6^4}{f_{12}^2}\left(P(q^3)+q\psi(q^9)\right) \pmod{3}.
        \end{align*}
        Finally, we consider the terms of our $q$-series of the form $q^{3m+2}$ to get our result.
    \end{proof}
\end{theorem}

\begin{theorem}
    For all $l$ and $m \geq 0$, $\OL{b}_{27l + 14}(27k + 18) \equiv 0 \pmod{3}$.
    % from 2025-11-19
    \begin{proof}
        We know
        \begin{align*}
            \sum_{n\geq0}\OL{b}_{27l+14}(n)q^{n} &= \frac{f_4^{27l+14}}{f_1^{54l+28}f_2^{27l+14}} \\
            &= \frac{f_4^{27l+15}}{f_1^{54l+27}f_2^{27l+15}}\frac{f_2}{f_1f_4}\\
            &\equiv \frac{f_{12}^{9l+5}}{f_3^{18l+9}f_6^{9l+5}}\left(\frac{\psi(-q^9)}{\psi(-q^3)^4}\left(P(-q^3)^2+qP(-q^3)\psi(-q^9)-q^2\psi(-q^9)^2\right)\right) \pmod{3}.
        \end{align*}
        Considering the terms of our $q$-series of the form $q^{3n}$, our congruence becomes
        \begin{align*}
             \sum_{n\geq0}\OL{b}_{27l+14}(3n)q^{3n} &\equiv \frac{f_{12}^{9l+5}}{f_3^{18l+9}f_6^{9l+5}}\frac{\psi(-q^3)^3}{\psi(-q^3)^3\psi(-q^3)}P(-q^3)^2 \pmod{3} \\
            &\equiv \frac{f_{12}^{9l+5}}{f_3^{18l+9}f_6^{9l+5}}\frac{1}{\psi(-q^3)}P(-q^3)^2 \pmod{3}.
        \end{align*}
        Substituting $q$ for $q^{3}$, we now have
        \begin{align*}
            \sum_{n\geq0}\OL{b}_{27l+14}(3n)q^{n} &\equiv \frac{f_{4}^{9l+5}}{f_1^{18l+9}f_2^{9l+5}}\frac{1}{\psi(-q)}P(-q)^2 \pmod{3} \\
            &\equiv \frac{f_{4}^{9l+5}}{f_1^{18l+9}f_2^{9l+5}}\frac{f_2}{f_1f_4}\left(\frac{f_1f_4f^5_6}{f^2_2f^2_3f^2_{12}}\right)^2 \pmod{3} \\
            &\equiv \frac{f_{4}^{9l+5}}{f_1^{18l+9}f_2^{9l+5}}\frac{f_2}{f_1f_4}\frac{f_1^2f_4^2f^{10}_6}{f^4_2f^4_3f^4_{12}} \pmod{3} \\
            &\equiv \frac{f_{4}^{9l+5}}{f_1^{18l+9}f_2^{9l+5}}\frac{f_1f_4}{f_2^3}\frac{f_6^{10}}{f_3^4f_{12}^4} \pmod{3} \\
            &\equiv \frac{f_{4}^{9l+6}}{f_1^{18l+9}f_2^{9l+6}}\frac{f_1}{f_2^2}\frac{f_6^{10}}{f_3^4f_{12}^4} \pmod{3} \\
            &\equiv \frac{f_{12}^{3l+2}}{f_3^{6l+3}f_6^{3l+2}}\frac{f_6^{10}}{f_3^4f_{12}^4}\frac{1}{\psi(q)} \pmod{3} \\
            &\equiv \frac{f_{12}^{3l-2}}{f_3^{6l+7}f_6^{3l-8}}\left(\frac{\psi(q^9)}{\psi(q^3)^4}\left(P(q^3)^2-qP(q^3)\psi(q^9)+q^2\psi(q^9)^2\right)\right) \pmod{3}.
        \end{align*}
        Again considering the terms of our $q$-series of the form $q^{3n}$, our congruence becomes
        \begin{align*}
            \sum_{n\geq0}\OL{b}_{27l+14}(9n)q^{3n} &\equiv \frac{f_{12}^{3l-2}}{f_3^{6l+7}f_6^{3l-8}}\frac{\psi(q^9)}{\psi(q^3)^4}P(q^3)^2 \pmod{3} \\
            &\equiv \frac{f_{12}^{3l-2}}{f_3^{6l+7}f_6^{3l-8}}\frac{\psi(q^3)^3}{\psi(q^3)^3\psi(q^3)}P(q^3)^2 \pmod{3} \\
            &\equiv \frac{f_{12}^{3l-2}}{f_3^{6l+7}f_6^{3l-8}}\frac{1}{\psi(q^3)}P(q^3)^2 \pmod{3}.
        \end{align*}
        Once more, substituting $q$ for $q^{3}$, we get
        \begin{align*}
            \sum_{n\geq0}\OL{b}_{27l+14}(9n)q^{n} &\equiv \frac{f_{4}^{3l-2}}{f_1^{6l+7}f_2^{3l-8}}\frac{1}{\psi(q)}P(q)^2 \pmod{3} \\
            &\equiv \frac{f_{4}^{3l-2}}{f_1^{6l+7}f_2^{3l-8}}\frac{f_1}{f_2^2}\left(\frac{f_2f_3^2}{f_1f_6}\right)^2 \pmod{3} \\
            &\equiv \frac{f_{4}^{3l-2}}{f_1^{6l+6}f_2^{3l-6}}\frac{f_2^2f_3^4}{f_1^2f_6^2} \pmod{3} \\
            &\equiv \frac{f_{4}^{3l-3}}{f_1^{6l+9}f_2^{3l-9}}\frac{f_1}{f_2f_4}\frac{f_3^4}{f_6^2}\pmod{3} \\
            &\equiv \frac{f_{12}^{l-1}}{f_3^{2l+3}f_2^{l-3}}\frac{f_3^4}{f_6^2}\left (P(-q^3)-q\psi(-q^9)\right) \pmod{3}.
        \end{align*}
        Finally, we consider the terms of our $q$-series of the form $q^{3m+2}$ to get our result.
    \end{proof}
\end{theorem}

\begin{theorem}
For all $l$ and $m\geq0$, $\OL{b}_{9l+2}(9m+3) \equiv 0 \pmod{3}$.
    \begin{proof}
        We know
        \begin{align*}
            \sum_{n\geq0}\OL{b}_{9l+2}(n)q^{n} &= \frac{f_4^{9l+2}}{f_1^{18l+4}f_2^{9l+2}} 
            = \frac{f_4^{9l+3}}{f_1^{18l+3}f_2^{9l+3}} \frac{f_2}{f_1f_4}
            \equiv \frac{f_{12}^{3l+1}}{f_3^{6l+1}f_6^{3l+1}} \frac{1}{\psi(-q)} \pmod{3}\\
            &\equiv \frac{f_{12}^{3l+1}}{f_3^{6l+1}f_6^{3l+1}} \frac{\psi(-q^9)}{\psi(-q^3)^4}\left(P(-q^3)^2+qP(-q^3)\psi(-q^9)-q^2\psi(-q^9)^2\right) \pmod{3}.
        \end{align*}
        Considering the terms of our $q$-series of the form $q^{3n}$, the above becomes
        $$\sum_{n\geq0}\OL{b}_{9l+2}(3n)q^{3n} \equiv \frac{f_{12}^{3l+1}}{f_3^{6l+1}f_6^{3l+1}} \frac{\psi(-q^9)}{\psi(-q^3)^4}P(-q^3)^2 \pmod{3}.$$
        Substituting $q$ for $q^{3}$, we now have
        \begin{align*}
            \sum_{n\geq0}\OL{b}_{9l+2}(3n)q^{n} &\equiv \frac{f_{4}^{3l+1}}{f_1^{6l+1}f_2^{3l+1}} \frac{\psi(-q^3)}{\psi(-q)^4}P(-q)^2 \pmod{3} \\
            &\equiv \frac{f_{4}^{3l+1}}{f_1^{6l+1}f_2^{3l+1}} \frac{\psi(-q)^3}{\psi(-q)^4}\left(\frac{f_1f_4f^5_6}{f^2_2f^2_3f^2_{12}}\right)^2 \pmod{3} \\
            &\equiv \frac{f_{4}^{3l+1}}{f_1^{6l+1}f_2^{3l+1}} \frac{1}{\psi(-q)}\frac{f_1^2f_4^2f^{10}_6}{f^4_2f^4_3f^4_{12}} \pmod{3} \\
            &\equiv \frac{f_{4}^{3l+1}}{f_1^{6l+1}f_2^{3l+1}} \frac{f_2}{f_1f_4}\frac{f_1^2f_4^2}{f^4_2}\frac{f^{10}_6}{f^4_3f^4_{12}}  \pmod{3} \\
            &\equiv \frac{f_{4}^{3l+2}}{f_1^{6l}f_2^{3l+4}} \frac{f^{10}_6}{f^4_3f^4_{12}}  \pmod{3} \\
            &\equiv \frac{f_{4}^{3l+3}}{f_1^{6l}f_2^{3l+3}} \frac{1}{f_2f_4} \frac{f^{10}_6}{f^4_3f^4_{12}}  \pmod{3} \\
            &\equiv \frac{f_{12}^{l+1}}{f_3^{2l}f_6^{l+1}} \frac{f^{10}_6}{f^4_3f^4_{12}}\frac{1}{F(q^2)}  \pmod{3} \\
            &\equiv \frac{f^{l-3}_{12}}{f^{2l+4}_3f^{l-9}_6} \frac{1}{F(q^2)}\pmod{3}.
        \end{align*}
        Using Theorem \ref{cubic 3-dissection} and substituting $q^2$ for $q$, we get
        \begin{align*}
            \frac{1}{F(q^2)} &= \frac{F(q^{18})^3}{F(q^6)^4}\left(X(q^6)^{-2}+q^2X(q^6)^{-1} +3q^4-2q^6X(q^6)+4q^8X(q^6)^2\right) \\
            &= \frac{F(q^{18})^3}{F(q^6)^4}\left(\left(X(q^6)^{-2}-2q^6X(q^6)\right)+q\left(3q^3\right)+q^2\left(X(q^6)^{-1}+4q^6X(q^6)^2\right)\right).
        \end{align*}
        Considering the terms of our $q$-series of the form $q^{3m+1}$, our congruence above becomes
        \begin{align*}
            \sum_{n\geq0}\OL{b}_{9l+2}(9m+3)q^{3m+1} &\equiv 3q^4\frac{f^{l-3}_{12}}{f^{2l+4}_3f^{l-9}_6} \frac{F(q^{18})^3}{F(q^6)^4} \pmod{3}\\
            &\equiv 0 \pmod{3}.
        \end{align*}
    \end{proof}
\end{theorem}

\begin{theorem}
For all $l$ and $m\geq0$, 
\begin{align*}
\OL{b}_{9l+7}(3m+2) &\equiv 0 \pmod{3}, \\
\OL{b}_{9l+7}(9m+6) &\equiv 0 \pmod{3}. 
\end{align*}
    \begin{proof}
        We know
        \begin{align*}
            \sum_{n\geq0}\OL{b}_{9l+7}(n)q^{n} &= \frac{f^{9l+7}_4}{f^{2(9l+7)}_1f^{9l+7}_2} 
            = \frac{f^{9l+6}_4}{f^{18l+15}_1f^{9l+6}_2} \frac{f_1f_4}{f_2} 
            \equiv \frac{f^{3l+2}_{12}}{f^{6l+5}_3f^{3l+2}_6} \psi(-q) \pmod{3}\\
            &\equiv \frac{f^{3l+2}_{12}}{f^{6l+5}_3f^{3l+2}_6} \left(P(-q^3) - q\psi(-q^9) \right) \pmod{3}
        \end{align*}
using Equation \eqref{3-Dissection Psi}.
Note that the power series representation of the final expression above contains no terms of the form $q^{3n+2}$, so our first congruence holds. 

        Considering the terms of our $q$-series of the form $q^{3n}$, our congruence becomes
        \begin{align*}
            \sum_{n\geq0}\OL{b}_{9l+7}(3n)q^{n} &\equiv \frac{f^{3l+2}_{12}}{f^{6l+5}_3f^{3l+2}_6} P(-q^3) \pmod{3} \\
            &\equiv \frac{f^{3l+2}_{4}}{f^{6l+5}_1f^{3l+2}_2} P(-q) \pmod{3} \\
            &\equiv \frac{f^{3l+2}_{4}}{f^{6l+5}_1f^{3l+2}_2} \frac{f_1f_4f^5_6}{f^2_2f^2_3f^2_{12}}  \pmod{3} \\
            &\equiv \frac{f^{3l+3}_{4}}{f^{6l+4}_1f^{3l+4}_2} \frac{f^{15}_2}{f^6_1f^6_{4}} \pmod{3} \\
            &\equiv \frac{f^{3l-3}_{4}}{f^{6l+10}_1f^{3l-11}_2}  \pmod{3} \\
            &\equiv \frac{f^{3l-3}_{4}}{f^{6l+9}_1f^{3l-9}_2}\frac{f^{2}_2}{f_1}  \pmod{3} \\
            &\equiv \frac{f^{3l-3}_{4}}{f^{6l+9}_1f^{3l-9}_2}\psi(q)  \pmod{3} \\
            &\equiv \frac{f^{l-1}_{12}}{f^{2l+3}_3f^{l-3}_6}\left(P(q^3) + q\psi(q^9) \right)  \pmod{3}.
        \end{align*}
        Finally, we consider the terms of our $q$-series of the form $q^{3m+2}$ to get our result.
    \end{proof}
\end{theorem}

\begin{theorem}
    For all $l$ and $m\geq0$, $\OL{b}_{3l+1}(3m+2) \equiv 0 \pmod{3}$.
    \begin{proof}
        We know
        \begin{align*}
            \sum_{n\geq0}\OL{b}_{3l+1}(n)q^{n} &= \frac{f^{3l+1}_4}{f^{2(3l+1)}_1f^{3l+1}_2} 
            = \frac{f^{3l}_4}{f^{6l+3}_1f^{3l}_2} \frac{f_1f_4}{f_2} 
            \equiv \frac{f^{l}_{12}}{f^{2l+1}_3f^{l}_6} \psi(-q) \pmod{3}\\
            &\equiv \frac{f^{l}_{12}}{f^{2l+1}_3f^{l}_6} \left(P(-q^3) - q\psi(-q^9) \right) \pmod{3}.
        \end{align*}
        Finally, we consider the terms of our $q$-series of the form $q^{3m+2}$ to get our result.
    \end{proof}
\end{theorem}

\section{Concluding Thoughts}
\label{sec:concluding_thoughts}
We close by highlighting two sets of thoughts.  First, it is very easy to prove the following infinite family of congruences modulo $p$ for any prime $p$.  

\begin{theorem}
    For $p$ prime, $l \geq 1$, $m \geq 0$, and $a $ such that $1 \leq a < p$, $\OL{b}_{pl}(pm+a) \equiv 0 \pmod{p}$.
    \begin{proof}
        Let $p$ be prime and $l \geq 1$.
    Using Lemma \ref{Expanded Freshman's Gn Fn}, note that 
        $$\sum_{n\geq 0} \OL{b}_{pl}(n)q^n = \frac{f_4^{pl}}{f_1^{2pl}f_2^{pl}} \equiv  \frac{f_{4p}^l}{f_p^{2l}f_{2p}^l} \pmod{p}.$$
        Our result immediately follows since
        $\frac{f_{4p}^l}{f_p^{2l}f_{2p}^l}$
        is a function of $q^p$.
  \end{proof}
\end{theorem}

Secondly, in \cite[Theorem 3.1]{PujashreeBuragohain}, Buragohain and Saikia note that 
$$\OL{b}_{2^\alpha l + j}(n)\equiv \OL{b}_{j}(n) \pmod{2^{\alpha+1}}$$
for $l,n \geq 0$ and $\alpha, j \geq 1$.
This result also holds for $\alpha = 0$ and $j = 0$, and this leads to a significant family of divisibility properties moduli arbitrarily large powers of 2.  

\begin{theorem}
For $\alpha, l \geq 0$ and $n \geq 1$, 
$$\OL{b}_{2^\alpha l }(n)\equiv 0 \pmod{2^{\alpha+1}}.$$
\end{theorem}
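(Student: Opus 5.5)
We need to prove that $\OL{b}_{2^\alpha l}(n)\equiv 0 \pmod{2^{\alpha+1}}$ for all $n\geq 1$. The plan is to work directly from Corollary \ref{B_k Prod Defn}, which writes $\OL{B}_{2^\alpha l}(q)$ as a product of powers of $\varphi$ evaluated at various $q^{2^i}$. Every exponent in that product is a multiple of $2^\alpha l$. So the statement reduces to a single claim about a power of $\varphi$:
$$\varphi(q)^{2^\alpha}\equiv 1 \pmod{2^{\alpha+1}}.$$
(For $l=0$ we have $\OL{B}_0(q)=1$ and the statement is trivial, so assume $l\geq 1$.)

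Granting the claim, the conclusion follows quickly. Replacing $q$ by $q^{2^i}$ gives $\varphi(q^{2^i})^{2^\alpha}\equiv 1\pmod{2^{\alpha+1}}$. Raising to the integer powers $l$ and $3l\cdot 2^{i-1}$ preserves the congruence. Hence every factor in
$$\OL{B}_{2^\alpha l}(q)=\varphi(q)^{2^\alpha l}\prod_{i\geq1}\varphi(q^{2^i})^{3\cdot 2^\alpha l\cdot 2^{i-1}}$$
is $\equiv 1 \pmod{2^{\alpha+1}}$. The infinite product causes no difficulty: for any fixed coefficient of $q^n$, only the finitely many factors with $2^i\leq n$ matter. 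Therefore $\OL{B}_{2^\alpha l}(q)\equiv 1\pmod{2^{\alpha+1}}$, and every coefficient with $n\geq1$ vanishes modulo $2^{\alpha+1}$.

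For the key claim I would induct on $\alpha$. Write $\varphi(q)=1+2S(q)$ with $S(q)=\sum_{n\geq1}q^{n^2}$. The case $\alpha=0$ is immediate from $\varphi(q)=1+2S(q)$. For the inductive step, suppose $\varphi(q)^{2^\alpha}=1+2^{\alpha+1}T(q)$, where $T(q)$ has integer coefficients. Squaring gives
$$\varphi(q)^{2^{\alpha+1}}=1+2^{\alpha+2}T(q)+2^{2\alpha+2}T(q)^2.$$
Since $2\alpha+2\geq\alpha+2$, this is $\equiv 1\pmod{2^{\alpha+2}}$, completing the induction. This is the same phenomenon as Lemma \ref{Expanded Freshman's Gn Fn} with $p=2$, and it generalizes the even-$k$ computation in the proof of Theorem \ref{Mod 4 Characterization}.

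I expect the only real subtlety to be bookkeeping: checking that each exponent in Corollary \ref{B_k Prod Defn} is genuinely divisible by $2^\alpha$, and handling the infinite product coefficientwise as above. Once that is done, the inductive squaring step carries the whole argument. As an alternative route, one could derive the result from the Buragohain–Saikia congruence with $j=0$, since $\OL{b}_0(n)=0$ for $n\geq1$. However, the paper remarks that this case is not covered there as stated, so I would rely on the self-contained $\varphi$ argument instead.
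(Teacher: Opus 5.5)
Your proof is correct, but it takes a different route from the paper. The paper gives no separate argument. It quotes the Buragohain--Saikia congruence $\OL{b}_{2^\alpha l+j}(n)\equiv \OL{b}_j(n) \pmod{2^{\alpha+1}}$ from \cite{PujashreeBuragohain}, asserts that it remains valid for $\alpha=0$ and $j=0$, and reads off the theorem from $\OL{b}_0(n)=0$ for $n\geq 1$. Since $\OL{B}_{2^\alpha l+j}(q)=\OL{B}_{2^\alpha l}(q)\OL{B}_j(q)$, the $j=0$ case of that congruence is literally the statement $\OL{B}_{2^\alpha l}(q)\equiv 1 \pmod{2^{\alpha+1}}$. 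So in effect the paper's argument rests on the assertion that the cited proof goes through in that case.

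Your argument supplies this content directly, through Corollary \ref{B_k Prod Defn} and the squaring induction $\varphi(q)^{2^\alpha}\equiv 1 \pmod{2^{\alpha+1}}$. Multiplying by $\OL{B}_j(q)$ then recovers the Buragohain--Saikia congruence for all $\alpha, j\geq 0$, so your proof justifies the paper's extension remark rather than depending on it. It also fits the $\varphi$-perspective of Section \ref{sec:mod2Perspective}, generalizing the even-$k$ computation in Theorem \ref{Mod 4 Characterization}.

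An equally short alternative, working with the eta-quotient itself, applies Lemma \ref{Expanded Freshman's Gn Fn} with $p=2$. Modulo $2^{\alpha+1}$ one has $f_4^{2^\alpha l}\equiv f_2^{2^{\alpha+1}l}$ and $f_1^{2^{\alpha+1}l}\equiv f_2^{2^\alpha l}$. Hence $\OL{B}_{2^\alpha l}(q)=f_4^{2^\alpha l}/(f_1^{2^{\alpha+1}l}f_2^{2^\alpha l})\equiv 1$; dividing is harmless because each $f_k$ is a unit in $\Z[[q]]$. This is the same binomial-coefficient phenomenon your induction exploits, applied to the $f_k$ rather than to $\varphi$.
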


Some of the congruences modulo small powers of 2 which we mentioned above (for example, some of the work of Shivaprasada Nayaka et. al. \cite{TheOG}) appear to follow from the above theorem.  

%\newpage

\end{document}